\numberwithin{equation}{section}
\definecolor{darkred}{rgb}{1,0,0} 
\definecolor{darkgreen}{rgb}{0,0.8,0}
\definecolor{darkblue}{rgb}{0,0,0.8}
\providecommand\@dotsep{5}
\definecolor{orange}{RGB}{253,85,0}
\definecolor{darkgreen}{RGB}{0,95,10}
 \newcommand{\ev}{\mathrm{ev}}
 \newcommand{\alphalim}{\alpha\mbox{-}\!\lim}
 \newcommand{\omegalim}{\omega\mbox{-}\!\lim}
 \newcommand{\N}{\mathds{N}}
 \newcommand{\Z}{\mathds{Z}}
 \newcommand{\R}{\mathds{R}}
 \newcommand{\C}{\mathds{C}}
 \newcommand{\A}{\mathcal{A}}
 \newcommand{\GG}{\mathcal{G}}
 \newcommand{\XX}{\mathcal{X}}
 \newcommand{\LL}{\mathcal{L}}
 \newcommand{\MM}{\mathcal{M}}
 \newcommand{\JJ}{\mathcal{J}}
 \newcommand{\CC}{\mathcal{C}}
 \newcommand{\inj}{\mathrm{inj}}
 \DeclareMathOperator{\supp}{supp}
 \DeclareMathOperator{\rank}{rank}
 \DeclareMathOperator*{\ttoup}{\llongrightarrow}
\DeclareRobustCommand{\llongrightarrow}{\relbar\joinrel\relbar\joinrel\rightarrow}
 \theoremstyle{plain}
 \newtheorem{MainThm}{Theorem}
 \newtheorem{MainCor}[MainThm]{Corollary}
 \newtheorem{Thm}{Theorem}[section]
 \newtheorem{Prop}[Thm]{Proposition}
 \newtheorem{Lemma}[Thm]{Lemma}
 \newtheorem{Cor}[Thm]{Corollary}
 \theoremstyle{definition}
 \newtheorem{Remark}[Thm]{Remark}
\title{Closed geodesics and the first Betti number} 
\author{Gonzalo Contreras}
\address{Gonzalo Contreras\newline\indent 
Centro de Investigaci\'on en Matem\'aticas\newline\indent Jalisco S/N, Valenciana, 36023 Guanajuato, GTO, Mexico}
\email{gonzalo@cimat.mx}
\author{Marco Mazzucchelli}
\address{Marco Mazzucchelli\newline\indent CNRS, UMPA, \'Ecole Normale Sup\'erieure de Lyon\newline\indent 46 all\'ee d'Italie, 69364 Lyon, France}
\email{marco.mazzucchelli@ens-lyon.fr}
\date{July 3, 2024}
\keywords{Closed geodesics, minimal measures, Aubry--Mather theory}
\subjclass[2020]{58E10, 53C22}
\thanks{Gonzalo Contreras is partially supported by CONACYT, Mexico, grant A1-S-10145. Marco Mazzucchelli is partially supported by the ANR grants CoSyDy (ANR-CE40-0014) and COSY (ANR-21-CE40-0002).}
\begin{document}

\begin{abstract}
We prove that, on any closed manifold of dimension at least two with non-zero first Betti number, a $C^\infty$ generic Riemannian metric has infinitely many closed geodesics, and indeed closed geodesics of arbitrarily large length. We derive this existence result combining a theorem of Mañé together with the following new theorem of independent interest: the existence of minimal closed geodesics, in the sense of Aubry--Mather theory, implies the existence of a transverse homoclinic, and thus of a horseshoe, for the geodesic flow of a suitable $C^\infty$-close Riemannian metric.
\end{abstract}

\maketitle

\section{Introduction}\label{s:intro}

\subsection{Background}\label{ss:background}
A long standing conjecture in Riemannian geometry asserts that any closed Riemannian manifold of dimension at least two has infinitely many closed geodesics. This conjecture holds for any simply connected closed Riemannian manifold whose rational cohomology ring is not generated by a single element, thanks to a combination of results of Gromoll and Meyer \cite{Gromoll:1969aa} and Vigué-Poirrier and Sullivan \cite{Sullivan:1975aa,Vigue-Poirrier:1976aa}. For non-simply connected Riemannian manifolds, the conjecture was confirmed by Bangert and Hingston \cite{Bangert:1984aa} for closed manifolds whose fundamental group is infinite abelian (the most difficult case being $\Z$), and later generalized by Taimanov \cite{Taimanov:1985aa} to larger classes of closed manifolds, including those with infinite solvable fundamental group. The conjecture also holds for any Riemannian surface, and most notably for any Riemannian 2-sphere, thanks to a combination of results of Bangert \cite{Bangert:1993aa} and Franks \cite{Franks:1992aa} or, alternatively, Hingston \cite{Hingston:1993aa}. To the best of the authors' knowledge, these are the last results confirming the conjecture for any Riemannian metric on certain classes of manifolds. Among the remaining cases, the conjecture is still open for closed manifolds of dimension at least three having the rational cohomology of a compact rank-one symmetric space $S^n$, $\C P^n$, $\mathds H P^n$, or $\mathrm{Ca}P^2$.

A result of Hingston \cite{Hingston:1984aa}, later reproved by Rademacher \cite{Rademacher:1989aa} with a different argument, asserts that a $C^4$-generic Riemannian metric on any simply connected closed  manifold with the rational cohomology of a compact rank-one symmetric space has infinitely many closed geodesics. When the fundamental group is infinite and non-abelian, $C^4$-generic existence results were proved only for specific classes of closed  manifolds, see \cite{Ballmann:1981aa,Rademacher:2022aa,Shelukhin:2023aa} and references therein. The general case of closed manifolds with infinite non-abelian fundamental group is still open.

\subsection{Main results}
In this paper, we prove a new existence result for closed geodesics and for homoclinics to closed geodesics, by means of Aubry--Mather theory \cite{Bangert:1990aa,Contreras:1999aa,Fathi:Weak_KAM_theorem_in_Lagrangian_dynamics,Sorrentino:2015aa}. We provide the statements after recalling some relevant definitions. 

We consider a closed Riemannian manifold $(M,g)$ of dimension at least two with non-zero first Betti number. This latter condition is equivalent to the non-vanishing of the first de Rham cohomology group $H^1(M;\R)$. For each closed 1-form $\sigma$ on $M$, we can associate to each $W^{1,2}$ curve $\gamma:[0,\tau]\to M$ an action
\begin{align}
\label{e:action}
 \A_{\sigma}(\gamma)=\frac1\tau \int_0^\tau \Big(\tfrac12\|\dot\gamma(t)\|_g^2 - \sigma(\dot\gamma(t)) \Big)\,dt.
\end{align}
When $\gamma$ is a loop, meaning that $\gamma(0)=\gamma(\tau)$, the value $A_{\sigma}(\gamma)$ does not depend on the specific choice of $\sigma$, but only on the cohomology class $[\sigma]\in H^1(M;\R)$. From now on, in order to simplify the notation, we will omit the brackets and write  $\sigma$ for the cohomology class as well.

Throughout this paper, by a geodesic $\gamma:\R\to M$ we always mean a non-constant solution of $\nabla_t\dot\gamma\equiv0$, where $\nabla_t$ is the Levi-Civita covariant derivative of $(M,g)$.
A closed geodesic is a geodesic $\gamma:\R\to M$ such that $\gamma=\gamma(\tau_\gamma+\cdot)$ for some minimal period $\tau_\gamma>0$. We associate to any such closed geodesic the Riemannian length $\LL(\gamma):=\tau_\gamma\|\dot\gamma\|_g$ and the action $\A_{\sigma}(\gamma|_{[0,\tau_\gamma]})$. A closed geodesic $\gamma$ is called \emph{minimal} (in the sense of Aubry--Mather theory \cite{Bangert:1990aa}) when, for some non-zero $\sigma\in H^1(M;\R)$, we have
\begin{align*}
\A_{\sigma}(\gamma|_{[0,\tau_\gamma]})
=
\inf_{\zeta}\A_{\sigma}(\zeta),
\end{align*}
where the infimum ranges over all $\tau>0$ and $W^{1,2}$ loops $\zeta:[0,\tau]\to M$, $\zeta(0)=\zeta(\tau)$. We say that $\gamma$ is $\sigma$-minimal or $(g,\sigma)$-minimal if we need to specify the co\-homology class and the Riemannian metric.

Any closed geodesic $\gamma$ lifts to a periodic orbit $\dot\gamma$ of the geodesic flow on the sphere tangent bundle of radius $\|\dot\gamma\|_g$. When $\gamma$ is hyperbolic, meaning that $\dot\gamma$ is a hyperbolic periodic orbit of the geodesic flow, it may admit transverse homoclinics, that is, geodesics distinct from $\gamma$ and whose lifts to the sphere tangent bundle lie on transverse intersection points of the stable and unstable manifolds of $\dot\gamma$. By a classical result from hyperbolic dynamics \cite[Theorem~6.5.2]{Fisher:0aa}, the presence of a transverse homoclinic implies the existence of a horseshoe for the geodesic flow. This further implies that the geodesic flow has positive topological entropy and exponential growth of the periodic orbits, and in particular that there are infinitely many closed geodesics of arbitrarily large length. 

The following is the main result of this article.
\vspace{2pt}

\begin{MainThm}\label{mt:main}
Let $(M,g_0)$ be a closed Riemannian manifold of dimension at least two, with a minimal closed geodesic $\gamma$. Then there exists a Riemannian metric $g$ arbitrarily $C^\infty$-close to $g_0$ such that $\gamma$ is a hyperbolic closed geodesic of $g$ with a transverse homoclinic.
\end{MainThm}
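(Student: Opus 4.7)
The plan is to proceed in three stages: first, perturb $g_0$ to make $\gamma$ hyperbolic while preserving its minimality; second, use weak KAM theory to produce a (possibly non-transverse) homoclinic orbit to $\dot\gamma$; and third, perform a final $C^\infty$-small perturbation to make this homoclinic transverse. For the first stage, I would construct a local perturbation of $g_0$ supported in a tubular neighborhood of $\gamma$ that modifies the linearized Poincar\'e map of $\gamma$ so as to make it hyperbolic, while keeping $\gamma$ itself as a closed geodesic of the new metric. Since the action in \eqref{e:action} depends continuously on the metric in the $C^2$ topology and the Mather $\alpha$-function varies upper semi-continuously, the loop $\gamma$ remains $\sigma'$-minimal for some cohomology class $\sigma'$ close to the original $\sigma$. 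A further small Ma\~n\'e-type generic perturbation (cf.~\cite{Contreras:1999aa}) can then be used to arrange that the Aubry set $\tilde\A(\sigma')$ reduces to the single orbit $\dot\gamma$.

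For the second stage, I would work with dual weak KAM solutions $u^\pm : M \to \R$ associated to $(g,\sigma')$; these are Lipschitz, agree on the projected Aubry set $\gamma(\R)$, and where differentiable have graphs contained respectively in the unstable manifold $\Wu(\dot\gamma)$ (for $u^-$) and in the stable manifold $\Ws(\dot\gamma)$ (for $u^+$). The crucial claim is that the intersection $\Wu(\dot\gamma)\cap\Ws(\dot\gamma)$ strictly contains $\dot\gamma$, which directly produces a homoclinic orbit $\eta$. A natural approach is to pick any point $p\in\Wu(\dot\gamma)\setminus\dot\gamma$ and study its forward orbit inside the compact energy level: any invariant probability measure supported on the $\omega$-limit set of $p$ should, via an asymptotic-minimality argument for calibrated orbits, be a $\sigma'$-minimal invariant measure; but after stage one the only such measure is carried by $\dot\gamma$, forcing $p\in\Ws(\dot\gamma)$ and hence producing the desired homoclinic.

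For the third stage, given such a homoclinic orbit $\eta$, I would apply a final $C^\infty$-small perturbation of $g$ supported in a small ball around a point $\eta(t_0)\in M\setminus\gamma(\R)$ to tilt $\Wu(\dot\gamma)$ relative to $\Ws(\dot\gamma)$ near $\eta(t_0)$ and force a transverse intersection; since the support of the perturbation is disjoint from $\gamma(\R)$, the hyperbolicity and $\sigma'$-minimality of $\gamma$ are preserved, and $\gamma$ acquires the sought-after transverse homoclinic. The main obstacle lies in making the asymptotic minimality argument of the second stage fully rigorous: one must rule out the possibility that a forward orbit on $\Wu(\dot\gamma)$ escapes to an invariant set carrying a $\sigma'$-minimal measure other than the one on $\dot\gamma$, and one must control the singular sets where $u^\pm$ fail to be differentiable so that the global geometry of the calibrated orbits on the compact manifold $M$ effectively forces $W^u$ and $W^s$ to meet outside $\dot\gamma$.
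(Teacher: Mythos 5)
Your stage 2 contains the essential gap, and it is exactly the heart of the theorem. Points on $\Wu(\dot\gamma)$ are only \emph{backward} asymptotic to $\dot\gamma$; their orbits are (at best) calibrated in backward time, and nothing forces their forward orbits to be action-minimizing. Consequently, the claim that ``any invariant probability measure supported on the $\omega$-limit set of $p\in\Wu(\dot\gamma)\setminus\dot\gamma$ is $\sigma'$-minimal'' is unjustified and false in general: such an orbit can perfectly well converge forward to another invariant set (e.g.\ a non-minimal closed geodesic) carrying only measures of strictly larger action. If forward orbits on the unstable manifold of the Aubry set automatically returned to it, homoclinics to Aubry sets would always exist, which is known to fail and is why Contreras--Paternain need $b_1(M)\geq 2$ and why this paper needs a genuinely new argument when $\pi_1(M)\cong\Z$. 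The paper replaces your dynamical wish with a global variational construction: with $u$ a $C^1$ critical subsolution (Fathi--Siconolfi) and $F=L-du+\alpha\geq 0$ vanishing only on $\dot\gamma$ after the conformal perturbation, one produces a geodesic $\zeta$, geometrically distinct from $\gamma$, with $\int_{-\infty}^{\infty}F(\dot\zeta)\,dt<\infty$ --- by constrained minimization over loops whose homotopy class lies in $GhG$ with $h\notin G=i_*\pi_1(N)$ when $\pi_1(M)\not\cong\Z$, and by a Bangert--Hingston min-max over $k$-spheres in based/free loop spaces when $\pi_1(M)\cong\Z$ --- and only then does finiteness of the $F$-integral force $\zeta$ to be a homoclinic. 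Your proposal contains no substitute for this step, and you acknowledge as much in your final sentence; but without it there is no proof.

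Two further, more minor, points. In stage 1, a ``Ma\~n\'e-type'' perturbation by a potential leaves the class of geodesic flows, so it cannot be used as stated to reduce the Aubry set while staying among Riemannian metrics; the paper instead uses an explicit conformal factor $e^\rho$ with $\rho\geq 0$ vanishing exactly on $\gamma$ and fiberwise convex transversally, which simultaneously keeps $\gamma$ minimal, makes $F$ strictly positive off $\gamma$ (no genericity needed), and hyperbolizes $\gamma$ via a Green-bundle comparison and Eberlein's criterion. In stage 3, perturbing near a point of the homoclinic to ``tilt'' $\Wu$ against $\Ws$ is plausible but not automatic (the perturbation can a priori destroy the non-transverse intersection rather than transversalize it); the paper invokes Petroll's theorem, which does exactly this with a $C^\infty$-small perturbation, so this stage is repairable by citation.
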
\vspace{1pt}

The existence of a transverse homoclinic to a hyperbolic closed geodesic after a $C^2$-small perturbation of the Riemannian metric was established by the first author in \cite{Contreras:2010aa} for those closed manifolds of dimension at least two on which a $C^2$-generic Riemannian metric has infinitely many closed geodesics\footnote{In the main theorems in \cite{Contreras:2010aa}, the requirement that a $C^2$-generic Riemannian metric on the considered closed manifold must have infinitely many closed geodesics is missing due to an omission.}. In particular, the theorem holds for simply connected closed manifolds (which do not admit minimal closed geodesics). 
Our Theorem~\ref{mt:main}, instead, employs in an essential way a minimal closed geodesic, and achieves the transverse homoclinic with a perturbation of the Riemannian metric in the finer $C^\infty$ topology.

In the general setting of Tonelli Hamiltonians, but under the stronger assumption that the first Betti number of the underlying closed manifold $M$ is at least two, the analogue of Theorem~\ref{mt:main} was established by the first author and  Paternain \cite[Cor.~2]{Contreras:2002aa} (in the Tonelli setting, the Hamiltonian function is perturbed with a potential). In a similar spirit, results on homoclinics were also obtained by Bolotin \cite{Bolotin:1995aa,Bolotin:1995ab,Bolotin:1997aa} using different methods. The essential novelty of our Theorem~\ref{mt:main} is that it allows $M$ to have first Betti number equal to one. In particular, the hardest case is when the fundamental group $\pi_1(M)$ is isomorphic to $\Z$, for which the quest for homoclinics requires a min-max scheme inspired by the above mentioned result of Bangert and Hingston \cite{Bangert:1984aa}.

If there are no $\sigma$-minimal closed geodesics for some non-zero cohomology class $\sigma$, a result of Mañé \cite[Th.~F]{Mane:1996aa} asserts that, even without perturbing the Riemannian metric, there exist infinitely many closed geodesics of arbitrarily large length. This, combined with Theorem~\ref{mt:main}, implies the following corollary.
We denote by $\GG^k(M)$ the space of smooth Riemannian metrics on $M$, endowed with the $C^k$ topology.\vspace{2pt}

\begin{MainCor}\label{mc:multiplicity}
Let $M$ be a closed manifold of dimension at least two with non-zero first Betti number. Then, for each $2\leq k\leq\infty$, there exists an open and dense subset of $\GG^k(M)$ such that every Riemannian metric therein admits infinitely many closed geodesics of arbitrarily large length.
\end{MainCor}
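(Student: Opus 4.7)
The plan is to exhibit explicitly an open dense subset of $\GG^k(M)$ on which the required conclusion holds. Fix once and for all a non-zero cohomology class $\sigma\in H^1(M;\R)$, which exists because $b_1(M)\geq1$. Define
\[
\VV^k := \bigl\{g\in\GG^k(M) : \text{the geodesic flow of }g\text{ admits a horseshoe}\bigr\}
\]
and
\[
\WW^k := \interior_{\GG^k(M)}\bigl\{g\in\GG^k(M) : g\text{ admits no }\sigma\text{-minimal closed geodesic}\bigr\},
\]
and let $\UU^k := \VV^k\cup\WW^k$. I will argue that $\UU^k$ is open, dense, and that every metric in it has the desired property.

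Openness is immediate: $\WW^k$ is open by construction, and $\VV^k$ is open because, for $k\geq 2$, a $C^k$-small change of the Riemannian metric produces a $C^1$-small perturbation of the associated geodesic flow, under which basic hyperbolic sets are structurally stable. That every $g\in\UU^k$ satisfies the conclusion is also immediate: any $g\in\VV^k$ carries a horseshoe, whose periodic orbits have arbitrarily large period and thus project to closed geodesics of arbitrarily large length; any $g\in\WW^k$ has no $\sigma$-minimal closed geodesic, so Mañé's theorem (as quoted in the excerpt) provides infinitely many closed geodesics of arbitrarily large length directly.

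The substantive point is density, and this is where Theorem~\ref{mt:main} enters. Fix $g_0\in\GG^k(M)$. If $g_0\in\WW^k$, nothing needs to be done. Otherwise, by definition of $\WW^k$, every $C^k$-neighborhood of $g_0$ contains some metric $g_1$ admitting a $\sigma$-minimal closed geodesic $\gamma$. Applying Theorem~\ref{mt:main} to the pair $(g_1,\gamma)$ yields a metric $g_2$, arbitrarily $C^\infty$-close (hence $C^k$-close) to $g_1$, for which $\gamma$ is a hyperbolic closed geodesic with a transverse homoclinic. The associated horseshoe places $g_2$ in $\VV^k\subseteq\UU^k$, and a diagonal argument — exploiting that $g_1$ can be chosen $C^k$-close to $g_0$ and that $g_2$ can be chosen $C^k$-close to $g_1$ — shows $g_0\in\overline{\UU^k}$.

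I do not anticipate any real obstacle: the decomposition is arranged precisely so that Theorem~\ref{mt:main} handles metrics approximable by ones carrying a $\sigma$-minimal closed geodesic, while Mañé's theorem covers the complementary open piece. The only minor book-keeping is verifying that the perturbation produced by Theorem~\ref{mt:main}, which is $C^\infty$-small, is automatically $C^k$-small, a consequence of the continuity of the inclusion $C^\infty\hookrightarrow C^k$; and checking that $\VV^k$ is open for $k\geq 2$, which follows from standard structural stability of basic hyperbolic sets.
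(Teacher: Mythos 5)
Your proposal is correct and follows essentially the same route as the paper: the paper's proof of (the slightly more general) Corollary~\ref{c:multiplicity} likewise takes the union of the open set $\mathcal{H}$ of metrics with a hyperbolic closed geodesic admitting a transverse homoclinic (your $\VV^k$, via the horseshoe) with the complement of the closure of the set of metrics carrying a minimal closed geodesic (your $\WW^k$, stated as an interior), obtaining density from Theorem~\ref{mt:main} and covering the remaining metrics by Mañé's theorem (Theorem~\ref{t:aperiodic}). Your only cosmetic deviations — fixing one non-zero class $\sigma$ instead of allowing any, and phrasing the hyperbolic-dynamics condition via horseshoes rather than transverse homoclinics — do not change the argument.
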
\vspace{1pt}

We will actually prove a slight generalization of Theorem~\ref{mt:main}, allowing the assumptions to be satisfied only by a finite cover of the closed manifold $M$ (Theorem~\ref{t:main_cover}), and derive a stronger version of the latter corollary (Corollary~\ref{c:multiplicity}).

\subsection{Organization of the paper} 
In Section~\ref{s:AM}, after recalling the needed background from Aubry--Mather theory, we prove Theorem~\ref{mt:main} and Corollary~\ref{mc:multiplicity}. In the Appendix, we prove a perturbation result for closed geodesics that is needed in the proof of Theorem~\ref{mt:main}.

\subsection{Acknowledgments}
We are grateful to the anonymous referees for carefully reading the manuscript and providing helpful suggestions and corrections.

\section{Aubry--Mather theory}
\label{s:AM}

\subsection{Preliminaries}
The proof of Theorem~\ref{mt:main} requires some tools from Aubry--Mather theory \cite{Bangert:1990aa,Contreras:1999aa,Fathi:Weak_KAM_theorem_in_Lagrangian_dynamics,Sorrentino:2015aa}. Let $(M,g)$ be a closed Riemannian manifold of dimension at least two. 
We consider the geodesic flow \[\phi^t=\phi_g^t:TM\to TM\] defined on the whole tangent bundle. Its orbits have the form
$\phi^t(\dot\gamma(0))=\dot\gamma(t)$,
where $\gamma:\R\to M$ is a geodesic or a constant curve. 
We denote by $\MM$ the space of Borel probability measures $\mu$ on $TM$ that are closed, meaning that
\begin{align*}
 \int_{TM} df\,d\mu=0,\qquad\forall f\in C^1(M).
\end{align*}
Within $\MM$, we have two important classes of measures:
\begin{itemize}
 \item All those probability measures $\mu$ on $TM$ that are invariant under the geodesic flow, i.e.~$\phi^t_*\mu=\mu$ for all $t\in\R$.
 
 \item All those Borel probability measures $\mu_\gamma$ uniformly distributed along a continuous and piecewise smooth loop $\gamma:[0,\tau]\to M$, $\gamma(0)=\gamma(\tau)$, i.e.
 \begin{align*}
 \int_{TM} F\,d\mu_\gamma := \frac1\tau\int_0^\tau F(\dot\gamma(t))\,dt,
 \qquad\forall F\in C^0(TM).
\end{align*}
\end{itemize}
Any $\mu\in\MM$ has a rotation vector $\rho(\mu)\in H_1(M;\R)$, which is defined via the duality with de Rham cohomology classes $\sigma\in H^1(M;\R)$ by
\begin{align*}
\langle\sigma,\rho(\mu)\rangle = \int_{TM} \sigma(v)\,d\mu(v).
\end{align*}
Here, as well as later on, within the integral we employ an arbitrary closed 1-form representing $\sigma$, which we still denote by $\sigma$ with a slight abuse of notation. Since $\mu$ is a closed measure, the value of the integral is independent of the choice of such a closed 1-form.

For each $\sigma\in H^1(M;\R)$, we consider the Lagrangian action functional
\begin{align*}
\A_{\sigma}=\A_{g,\sigma}:\MM\to(-\infty,\infty],
\qquad
 \A_{\sigma}(\mu) = \int_{TM} \Big(\tfrac12\|v\|_g^2 - \sigma(v) \Big)\,d\mu(v).
\end{align*}
Notice that
$\A_\sigma(\mu)=\A_0(\mu)-\langle\sigma,\rho(\mu)\rangle$. The notation for the action $\A_\sigma$ is consistent with the one introduced in~\eqref{e:action}: for each continuous and piecewise smooth loop $\gamma:[0,\tau]\to M$, $\gamma(0)=\gamma(\tau)$,  with associated probability measure $\mu_\gamma$, we have 
\[\A_{\sigma}(\gamma)=\A_{\sigma}(\mu_\gamma).\]

The action functional $\A_{\sigma}$ is bounded from below and achieves its minimum on $\MM$. Any minimizer turns out to be invariant under the geodesic flow, and is called a $\sigma$-minimal measure (or a $(g,\sigma)$-minimal measure if we need to specify the Riemannian metric). 
The space of $\sigma$-minimal measures, endowed with the weak-$*$ topology, is compact and convex, and its extremal points are ergodic measures.
Mather alpha function $\alpha=\alpha_g:H^1(M;\R)\to\R$ is defined by
\begin{align*}
\alpha(\sigma):=-\min_{\MM}\A_\sigma.
\end{align*}
Alternatively, instead of minimizing over the space of closed measures, Mather alpha function $\alpha:H^1(M;\R)\to\R$ is also characterized by
\begin{align}\label{e:alpha}
\alpha(\sigma)=-\inf_{\gamma}\A_{\sigma}(\gamma),
\end{align}
where the infimum ranges over all $\tau\geq0$ and $W^{1,2}$ loops $\gamma:[0,\tau]\to M$, $\gamma(0)=\gamma(\tau)$. 
If $H^1(M;\R)$ is non-trivial, the function $\alpha$ is non-negative, convex, superlinear, and satisfies $\alpha(0)=0$. These properties hold more generally for the alpha function associated to any Tonelli Lagrangian. In the specific case of geodesic flows, we also have the following.

\begin{Lemma}\label{l:min_alpha}
The origin is a strict local minimum of Mather alpha function, i.e. $\alpha(\sigma)>0$ for all non-zero $\sigma\in H^1(M;\R)$.
\end{Lemma}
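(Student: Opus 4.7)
The plan is to exhibit, for each non-zero $\sigma \in H^1(M;\R)$, a piecewise smooth loop $\gamma$ satisfying $\A_\sigma(\gamma) < 0$; by the variational characterization~\eqref{e:alpha} this immediately yields $\alpha(\sigma) > 0$. Since the non-negativity $\alpha(\sigma) \geq 0$ is already known, only the strict inequality remains.

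First, I would produce a suitable test loop. Since $\sigma$ is non-zero, the pairing $[c] \mapsto \langle\sigma,[c]\rangle$ is non-trivial on $H_1(M;\R)$, and this group is generated by the image of the Hurewicz map $\pi_1(M) \to H_1(M;\Z) \hookrightarrow H_1(M;\R)$. Hence there exists a smooth loop $\eta \colon [0,1] \to M$, $\eta(0) = \eta(1)$, with
\[
c := \int_0^1 \sigma(\dot\eta(s))\,ds \neq 0;
\]
after possibly reversing orientation, I may assume $c > 0$.

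Next, I would exploit the different homogeneity of the two terms in $\A_\sigma$ under slow reparametrizations. For $\tau > 0$, define $\eta_\tau \colon [0,\tau] \to M$ by $\eta_\tau(t) := \eta(t/\tau)$; a change of variables gives
\begin{align*}
\A_\sigma(\eta_\tau)
= \frac{1}{2\tau^2}\int_0^1 \|\dot\eta(s)\|_g^2\,ds \,-\, \frac{c}{\tau}.
\end{align*}
The kinetic term decays like $\tau^{-2}$ while the linear term in $\sigma$ decays only like $\tau^{-1}$, so for $\tau$ large enough $\A_\sigma(\eta_\tau) < 0$. Inserting this into~\eqref{e:alpha} establishes the lemma.

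There is no serious obstacle: the argument rests entirely on the degree mismatch between the kinetic Lagrangian (homogeneous of degree two in the speed) and the term $\sigma(\dot\gamma)$ (homogeneous of degree one), which ensures that any sufficiently slow traversal of a loop pairing non-trivially with $\sigma$ has strictly negative action. The same mechanism fails for a general Tonelli Lagrangian, which is why the corresponding statement is specific to geodesic flows (or more generally to Lagrangians that vanish on the zero section).
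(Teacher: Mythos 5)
Your proof is correct and follows essentially the same route as the paper: both exhibit a loop pairing non-trivially with $\sigma$ and traverse it slowly enough that the quadratic kinetic term is dominated by the linear term $\sigma(\dot\gamma)$, giving $\A_\sigma<0$ and hence $\alpha(\sigma)>0$ via~\eqref{e:alpha}. The only cosmetic difference is that the paper slows the loop by fixing a small constant speed, while you stretch the parametrization interval; the mechanism is identical.
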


\begin{proof}
Consider a non-zero cohomology class $\sigma\in H^1(M;\R)$, and fix any smooth loop $\gamma:[0,\tau]\to M$, $\gamma(0)=\gamma(\tau)$ such that
\begin{align*}
 \int_\gamma \sigma>0.
\end{align*}
Since this latter integral and the length 
\[
\LL(\gamma)
:=
\int_0^\tau \|\dot\gamma(t)\|_g\,dt
\] 
are independent of the parametrization of $\gamma$, we can assume that the speed $\|\dot\gamma\|_g$ is constant and sufficiently small so that
\[
\frac12 \LL(\gamma)\|\dot\gamma\|_g< \int_\gamma \sigma .
\]
This implies
\begin{align*}
\A_\sigma(\gamma)
=
\frac{1}{\tau} \left(
\frac12 \LL(\gamma)\|\dot\gamma\|_g 
- 
\int_\gamma \sigma \right)
<
0,
\end{align*}
and therefore $\alpha([\sigma])>0$ according to~\eqref{e:alpha}.
\end{proof}

Notice that, for the zero cohomology class $\sigma=0$, the infimum in~\eqref{e:alpha} is always a minimum, and it is achieved only by the constant curves. Instead, for each non-zero $\sigma\in H^1(M;\R)$, a $W^{1,2}_{\mathrm{loc}}$ periodic curve $\gamma:\R\to M$ of minimal period $\tau_\gamma$ is a $\sigma$-minimal closed geodesic if and only if $\gamma|_{[0,\tau_\gamma]}$ achieves the minimum in~\eqref{e:alpha}. In this case, in particular $\gamma$ is smooth, and the measure $\mu_{\gamma}$ associated with $\gamma|_{[0,\tau_\gamma]}$ is $\sigma$-minimal.

Let $\pi:TM\to M$ be the base projection of the tangent bundle. Mather's graph theorem \cite[Theorem~2]{Mather:1991aa} asserts that, for any $\sigma$-minimizing measure $\mu$, the restriction $\pi|_{\supp(\mu)}$ is an injective bi-Lipschitz map onto its image. Moreover, by a theorem due to Carneiro \cite{Carneiro:1995aa}, $\supp(\mu)$ is contained in the sphere tangent bundle 
\[S^r\!M=\big\{v\in TM\ \big|\ \|v\|_g=r\big\},\]
for $r^2/2=\alpha(\sigma)$. In particular, any $\sigma$-minimal closed geodesic $\gamma$   has speed $\|\dot\gamma\|_g\equiv r$ and is simple, i.e.~the restriction $\gamma|_{[0,\tau_\gamma)}$ is an injective map, where $\tau_\gamma>0$ is the minimal period of $\gamma$.

The Riemannian metric $g$ and a closed 1-form $\sigma$ on $M$ define a Tonelli Lagrangian $L:TM\to\R$ and a dual Tonelli Hamiltonian $H:T^*M\to\R$ by
\begin{align*}
 L(v)=\tfrac12 \|v\|_{g}^2 - \sigma(v),
 \qquad
 H(p)=\tfrac12\|p+\sigma\|_g^2.
\end{align*}
These functions are related by the Fenchel inequality $H(p)+L(v)\geq p(v)$.
A theorem due to Fathi and Siconolfi \cite{Fathi:2004aa}, asserts that there exists a $C^{1}$ function $u:M\to\R$ satisfying the Hamilton-Jacobi inequality $H\circ du\leq\alpha(\sigma)$.

\subsection{Proofs of the theorems}

Before carrying out the proof of Theorem~\ref{mt:main}, for the reader's convenience we first provide the short proof of a theorem due to Mañé \cite[Th.~F]{Mane:1996aa} in the special case of geodesic flows, which we will need to derive Corollary~\ref{mc:multiplicity}. 
We say that a closed geodesic $\gamma$ of $(M,g)$ has non-zero real homology when $[\gamma|_{[0,\tau_\gamma]}]\neq0$ in $H_1(M;\R)$.

\begin{Thm}\label{t:aperiodic}
Let $(M,g)$ be a closed Riemannian manifold. If there exists a non-zero $\sigma\in H^1(M;\R)$ not admitting any $\sigma$-minimal closed geodesic, then there exist infinitely many closed geodesics of arbitrarily large length and non-zero real homology.
\end{Thm}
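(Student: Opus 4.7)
By Lemma~\ref{l:min_alpha}, $\alpha(\sigma)>0$; set $r:=\sqrt{2\alpha(\sigma)}$. By Carneiro's theorem, any $\sigma$-minimal measure $\mu$ is supported in $S^rM$, and the identity $-\alpha(\sigma)=\A_\sigma(\mu)=\tfrac12 r^2-\langle\sigma,\rho(\mu)\rangle$ yields $\langle\sigma,\rho(\mu)\rangle=2\alpha(\sigma)\neq 0$. My plan is to close long orbits of such a $\mu$ to produce a sequence of loops $\eta_n$ with $\A_\sigma(\eta_n)\to-\alpha(\sigma)$, to replace each $\eta_n$ by a closed geodesic $\gamma_n$ of no larger action in the same free homotopy class, and then to derive from the hypothesis that there is no $\sigma$-minimal closed geodesic that $\LL(\gamma_n)\to\infty$, giving infinitely many distinct closed geodesics of arbitrarily large length.

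First I would fix an ergodic $\sigma$-minimal measure $\mu$ (by ergodic decomposition) and a point $v=\dot\gamma(0)\in\supp(\mu)$ that is simultaneously Poincaré-recurrent for the geodesic flow and Birkhoff-generic. Along a sequence $T_n\to\infty$ with $\dot\gamma(T_n)\to v$ in $TM$, Birkhoff's ergodic theorem gives
\[
\tfrac{1}{T_n}\int_0^{T_n}\sigma(\dot\gamma(t))\,dt\;\longrightarrow\;\langle\sigma,\rho(\mu)\rangle=2\alpha(\sigma).
\]
I close the orbit segment $\gamma|_{[0,T_n]}$ with the short minimal geodesic arc $\beta_n$ of constant speed $r$ from $\gamma(T_n)$ to $\gamma(0)$, of length $\epsilon_n\to 0$; the resulting piecewise smooth loop $\eta_n$ has constant speed $r$ and period $\tau_n=T_n+\epsilon_n/r$, so
\[
\A_\sigma(\eta_n)=\tfrac12 r^2-\tfrac{1}{\tau_n}\!\int_{\eta_n}\!\sigma=\alpha(\sigma)-2\alpha(\sigma)+o(1)=-\alpha(\sigma)+o(1).
\]

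Writing $c_n:=\int_{\eta_n}\sigma$, I take a shortest closed geodesic $\tilde\gamma_n$ in the (non-trivial) free homotopy class of $\eta_n$ and reparametrize it at the constant speed $c_n/\LL(\tilde\gamma_n)$, obtaining a closed geodesic $\gamma_n$ of length $\LL(\gamma_n)=\LL(\tilde\gamma_n)\leq\LL(\eta_n)$ with
\[
\A_\sigma(\gamma_n)=-\tfrac{c_n^2}{2\LL(\gamma_n)^2}=\inf\bigl\{\A_\sigma(\zeta)\;\big|\;\zeta\text{ loop freely homotopic to }\eta_n\bigr\}\leq\A_\sigma(\eta_n),
\]
so $\A_\sigma(\gamma_n)\to-\alpha(\sigma)$ and hence the speed $c_n/\LL(\gamma_n)\to r$. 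If $\LL(\gamma_n)$ stayed bounded along some subsequence, then so would $c_n=\LL(\gamma_n)\cdot(c_n/\LL(\gamma_n))$ and the periods $\tau_n=\LL(\gamma_n)/(c_n/\LL(\gamma_n))$ (bounded below since $\LL(\gamma_n)\geq 2\inj(M,g)$), and the initial vectors $\dot\gamma_n(0)$ would remain in a fixed compact subset of $TM$; passing to a further subsequence with $\dot\gamma_n(0)\to v_\infty$ and $\tau_n\to\tau_\infty>0$, continuity of the geodesic flow would produce a closed geodesic $\gamma_\infty$ of period $\tau_\infty$ with $\A_\sigma(\gamma_\infty)=-\alpha(\sigma)$, i.e.\ a $\sigma$-minimal closed geodesic, contradicting the hypothesis. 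Therefore $\LL(\gamma_n)\to\infty$: the $\gamma_n$'s are infinitely many distinct closed geodesics of arbitrarily large length, and since $\langle\sigma,[\gamma_n]\rangle=c_n\to\infty$, they have non-zero real homology for all large $n$.

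The main subtlety is the closing step: $\beta_n$ must contribute only $o(T_n)$ to both the kinetic and $\sigma$-integrals of $\eta_n$, which forces the use of Poincaré recurrence on $TM$ (so that $\dot\gamma(T_n)\to\dot\gamma(0)$, not merely $\gamma(T_n)\to\gamma(0)$) and of the Mather speed $r$ for the parametrization of $\beta_n$. The remainder is the direct method of the calculus of variations in each free homotopy class followed by a compactness argument for bounded families of closed geodesics.
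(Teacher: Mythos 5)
Your preparatory steps are correct and essentially parallel to the paper's: you close a recurrent, Birkhoff-generic orbit of a $\sigma$-minimal measure to produce loops $\eta_n$ with $\A_\sigma(\eta_n)\to-\alpha(\sigma)$, and the identity $\langle\sigma,\rho(\mu)\rangle=2\alpha(\sigma)$, the computation $\A_\sigma(\gamma_n)=-c_n^2/(2\LL(\tilde\gamma_n)^2)\le\A_\sigma(\eta_n)$, and the conclusion that the speeds $c_n/\LL(\tilde\gamma_n)\to r$ are all fine. Where you diverge from the paper is the minimization step (you minimize $\A_\sigma$ within the free homotopy class of $\eta_n$, the paper minimizes over all loops of period $\tau_n$), and it is in the endgame of your variant that there is a genuine gap.

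The quantity you control, $\LL(\tilde\gamma_n)$, is the total length of the shortest loop in the free homotopy class of $\eta_n$, which is not the length of a closed geodesic in the sense of the statement (length over the \emph{minimal} period). Since $c_n=\int_{\eta_n}\sigma\approx 2\alpha(\sigma)\,\tau_n\to\infty$ while $c_n/\LL(\tilde\gamma_n)\to r$, you get $\LL(\tilde\gamma_n)=c_n/(c_n/\LL(\tilde\gamma_n))\to\infty$ automatically; so your ``bounded length'' alternative is vacuous and the hypothesis that no $\sigma$-minimal closed geodesic exists is never actually used --- a red flag, since without that hypothesis the conclusion would settle the general conjecture for manifolds with $b_1>0$. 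Concretely, the class of $\eta_n$ may be (conjugate to) a high power $\beta^{k_n}$, and the length minimizer $\tilde\gamma_n$ may then be the $k_n$-fold iterate of a single fixed prime closed geodesic $\beta$ of bounded length (this is exactly what happens on a flat torus): all your $\gamma_n$ would share one image, their minimal-period lengths would stay bounded, and you would obtain neither infinitely many closed geodesics nor arbitrarily large lengths. The repair is to run your compactness argument on the prime data, which is precisely the paper's final step: let $\tau_{\gamma_n}$ be the minimal period of $\gamma_n$ (the full period is an integer multiple of it, so the averaged action is unchanged). If $\tau_{\gamma_n}$ stayed bounded along a subsequence, then, since the speeds converge to $r$ and are bounded away from $0$ and $\infty$, the vectors $\dot\gamma_n(0)$ lie in a compact subset of $TM$, and a subsequence of the prime geodesics converges to a closed geodesic of action $-\alpha(\sigma)$, i.e.\ a $\sigma$-minimal closed geodesic, contradicting the hypothesis. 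Hence $\tau_{\gamma_n}\to\infty$, the lengths $\tau_{\gamma_n}\|\dot\gamma_n\|_g$ diverge, and only then do you get infinitely many geometrically distinct closed geodesics of arbitrarily large length.
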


\begin{proof}
Let $\mu$ be an ergodic $\sigma$-minimal measure. By Poincar\'e recurrence theorem and Birkhoff ergodic theorem, there exists $v\in\supp(\mu)$ that is recurrent for the geodesic flow $\phi^t$ and regular for the Birkhoff average. Namely, there exists a sequence of positive real numbers $\tau_n\to\infty$ such that $\phi^{\tau_n}(v)\to v$, and 
\begin{align*}
 \lim_{n\to\infty} \frac1{\tau_n}\int_0^{\tau_n} F(\phi^t(v))\,dt
 =
 \int_{TM} F\,d\mu,\qquad\forall F\in L^1(TM,\mu).
\end{align*}
We fix a quantity $\delta>0$ such that $\delta\|v\|_g$ is smaller than the injectivity radius $\inj(M,g)$, and consider the geodesic arc $\eta_n:[0,\tau_n-\delta]\to M$, $\eta_n(t)=\pi(\phi^t(v))$. For all $n$ large enough, there exists a unique geodesic arc $\zeta_n:[0,\delta]\to M$ of length smaller than $\inj(M,g)$ joining $\eta_n(\tau_n-\delta)$ and $\eta_n(0)=\pi(v)$. 
Notice that the action $\A_\sigma(\zeta_n)$ is uniformly bounded from above for all $n$.
The concatenation $\eta_n*\zeta_n:[0,\tau_n]\to M$ is a loop with action
\begin{align}
\label{e:action_eta*zeta}
 \A_\sigma(\eta_n*\zeta_n)
 =
 \frac{(\tau_n-\delta)\A_\sigma(\eta_n) +\delta\A_\sigma(\zeta_n)}{\tau_n}
 \ttoup_{n\to\infty}
 \A_\sigma(\mu)=-\alpha(\sigma).
\end{align}
Let $\Omega_n$ be the space of $W^{1,2}$ loops $\zeta:[0,\tau_n]\to M$, $\zeta(0)=\zeta(\tau_n)$, and  $\gamma_n\in\Omega_n$ a loop that  minimizes $\A_\sigma|_{\Omega_n}$, i.e.
\begin{align*}
 \A_\sigma(\gamma_n)\leq\A_\sigma(\zeta),
 \qquad\forall \zeta\in\Omega_n. 
\end{align*}
In particular, 
\begin{align}
\label{e:action_gamma_eta}
 \A_\sigma(\gamma_n)\leq\A_\sigma(\eta_n*\zeta_n).
\end{align}
Each $\gamma_n$ is either a constant curve (with action $\A_\sigma(\gamma_n)=0$) or a closed geodesic (namely a geodesic loop such that $\dot\gamma_n(0)=\dot\gamma_n(\tau_n)\neq0$).
Up to extracting a subsequence, the probability measure $\mu_{\gamma_n}$ converges in the weak-$*$ topology to an invariant probability measure $\nu$, and so do the corresponding actions $\A_\sigma(\gamma_n)\to\A_\sigma(\nu)$. By~\eqref{e:action_eta*zeta} and~\eqref{e:action_gamma_eta}, we infer $\A_\sigma(\nu)\leq\A_\sigma(\mu)$, and therefore $\A_\sigma(\nu)=\A_\sigma(\mu)$. Namely, $\nu$ is a $\sigma$-minimal measure. Since $\sigma\neq0$, Lemma~\ref{l:min_alpha} implies that $\alpha(\sigma)>0$. By
\begin{align*}
 \langle \sigma,\rho(\nu) \rangle
 =
 \A_0(\nu) - \A_\sigma(\nu)
 =
 \A_0(\nu) + \alpha(\sigma)
 \geq
 \alpha(\sigma)
 >
 0,
\end{align*}
we infer that $\rho(\nu)\neq0$. 
Since by assumption there are no $\sigma$-minimal closed geodesics, we have the strict inequality $\A_\sigma(\gamma_n)>\A_\sigma(\nu)$. This, together with the convergence $\A_\sigma(\gamma_n)\to\A_\sigma(\nu)$, implies that the family $\gamma_n$, for $n\geq0$, contains infinitely many closed geodesics. Since $[\gamma_n]\to\rho(\nu)\neq0$, the closed geodesics $\gamma_n$ have non-zero real homology for all $n$ large enough. Since the support of $\nu$ is contained in the sphere tangent bundle $S^r\!M$ for $r^2/2=\alpha(\sigma)$, the weak-$*$ convergence $\mu_{\gamma_n}\to\nu$ implies that $\|\dot\gamma_n\|_g\to r$. Let $\tau_{\gamma_n}\leq\tau_n$ be the minimal period of the closed geodesic $\gamma_n$, which is the minimal positive number such that $\gamma_n(0)=\gamma_n(\tau_{\gamma_n})$ and $\dot\gamma_n(0)=\dot\gamma_n(\tau_{\gamma_n})$. The sequence $\tau_{\gamma_n}$ must diverge, for otherwise $\gamma_n$ would converge to a $\sigma$-minimal closed geodesic. Therefore the lengths $\LL(\gamma_n)=\tau_{\gamma_n}\|\dot\gamma_n\|_g$ diverge.
\end{proof}

We will infer our main Theorem~\ref{mt:main} from the following statement, which under the same assumptions provide a (not necessarily transverse) homoclinic after an explicit conformal perturbation of the Riemannian metric. The transversality of the homoclinic will then be achieved by invoking a perturbation result of Petroll \cite{Burns:2002aa}.

\begin{Thm}\label{t:homoclinic}
Let $(M,g_0)$ be a closed Riemannian manifold of dimension at least two, with a minimal closed geodesic $\gamma$.
Let $\rho:M\to[0,\infty)$ be any smooth function such that $\rho(x)=0$ and $d^2\rho(x)[v,v]>0$ for all $x\in\gamma$ and $v\in T_xM\setminus\{0\}$ orthogonal to $\gamma$, and $\rho(y)>0$ for all $y\in M\setminus\gamma$. Then $\gamma$ is a hyperbolic closed geodesic of $e^\rho g_0$ with a homoclinic.
\end{Thm}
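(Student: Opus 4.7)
Since $\rho\ge 0$ vanishes identically on $\gamma$, also $d\rho$ vanishes along $\gamma$, so the Levi-Civita connections of $g_0$ and $g:=e^\rho g_0$ coincide on vectors along $\gamma$, making $\gamma$ a $g$-geodesic with the same parametrization. Let $\sigma\in H^1(M;\R)$ be a class for which $\gamma$ is $(g_0,\sigma)$-minimal. On every closed measure $\mu\in\MM$,
\[
\A_{g,\sigma}(\mu)-\A_{g_0,\sigma}(\mu)=\tfrac{1}{2}\int_{TM}\bigl(e^{\rho\circ\pi}-1\bigr)\|v\|_{g_0}^2\,d\mu(v)\ge 0,
\]
with equality iff $\pi(\supp(\mu))\subset\gamma$. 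Applied to $\mu_\gamma$, this yields $\alpha_g(\sigma)=\alpha_{g_0}(\sigma)$ and shows that $\mu_\gamma$ is $(g,\sigma)$-minimal; applied to any other $(g,\sigma)$-minimal $\nu$, it forces $\nu$ to be $(g_0,\sigma)$-minimal with support projecting into $\gamma$, so Mather's graph theorem combined with flow invariance identifies $\nu=\mu_\gamma$. Hence the $g$-Aubry set is reduced to the single orbit $\dot\gamma$.

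\textbf{Stage 2: Hyperbolicity.} A direct computation along $\gamma$, using $\rho\equiv 0$, $d\rho\equiv 0$, and the differentiated identity $\mathrm{Hess}\,\rho(\dot\gamma,\cdot)\equiv 0$ together with the conformal-change formula for the Riemann tensor, gives for every normal $\tau_\gamma$-periodic variation $J$ along $\gamma$,
\[
\delta^2\A_{g,\sigma}[J,J]-\delta^2\A_{g_0,\sigma}[J,J]=\tfrac{1}{2\tau_\gamma}\int_0^{\tau_\gamma}\mathrm{Hess}\,\rho(J(t),J(t))\,dt,
\]
which is strictly positive for $J\not\equiv 0$. Since $\delta^2\A_{g_0,\sigma}\ge 0$ by $(g_0,\sigma)$-minimality, $\gamma$ becomes a strictly non-degenerate local minimum of $\A_{g,\sigma}$; in particular $1$ is not in the spectrum of the linearized Poincaré map $P_\gamma$. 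Now the $(g,\sigma)$-minimality precludes conjugate points along the lift of $\gamma$ to the universal cover, so both Green Lagrangian sub-bundles of the symplectic normal bundle to $\dot\gamma$ are well-defined and $P_\gamma$-invariant; the strict non-degeneracy together with the isolation of $\dot\gamma$ in the Aubry set forces the two Green bundles to be transverse, whence they are the stable and unstable eigenspaces of $P_\gamma$, giving hyperbolicity.

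\textbf{Stage 3: Construction of a homoclinic.} With the Aubry set shrunk to $\dot\gamma$, every orbit of the Mañé set outside $\dot\gamma$ is simultaneously $\alpha$- and $\omega$-asymptotic to $\dot\gamma$, hence a homoclinic. Producing such an orbit is the main obstacle. When $b_1(M)\ge 2$ I would adapt the Contreras-Paternain scheme \cite{Contreras:2002aa}, exploiting an independent second cohomology direction to run a mountain-pass argument. The delicate case is $b_1(M)=1$: lifting $\gamma$ to a $\Z$-periodic line $\tilde\gamma$ in the universal cover $\tilde M$ under a generator $T$ of the deck group, I would construct a Mañé heteroclinic between $\tilde\gamma$ and $T^k\tilde\gamma$ via a min-max scheme modelled on Bangert-Hingston \cite{Bangert:1984aa}, taking families of $W^{1,2}$ paths that slide between tubular neighborhoods of the two lifts and measuring their action by a Peierls-barrier functional. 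The strict minimality of $\mu_\gamma$ ensures the min-max level strictly exceeds the action of every iterate of $\gamma$; a Palais-Smale-type compactness argument then extracts a critical orbit that is semi-static but not in the Aubry set, whose projection to $M$ is the desired homoclinic. Setting up the correct topology on the path families, and verifying the Palais-Smale condition one actually needs, is the core technical difficulty.
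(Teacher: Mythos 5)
Your Stage 1 matches the paper's observations (the action comparison $\A_{g,\sigma}\geq\A_{g_0,\sigma}$ with equality on $\gamma$, so $\gamma$ stays $(g,\sigma)$-minimal), and your overall skeleton—make $\gamma$ hyperbolic, then find an orbit with finite Ma\~n\'e-type action which must be bi-asymptotic to $\gamma$—is the right one. But the two load-bearing steps are not actually proved.

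\emph{Hyperbolicity (Stage 2).} The inference ``positive-definite second variation on $\tau_\gamma$-periodic variations $\Rightarrow$ $1\notin\mathrm{spec}(P_\gamma)$, and non-degeneracy plus isolation in the Aubry set forces the Green bundles to be transverse'' is a non sequitur. Non-degeneracy of the period-one index form says nothing about eigenvalue $-1$ or other unit-circle eigenvalues, and isolation of $\dot\gamma$ in the Aubry set is not known to imply transversality of $G_+$ and $G_-$; that transversality is exactly what must be established. The paper does this quantitatively (Proposition~\ref{p:make_hyperbolic}): one compares, for the two metrics, the index forms of the Jacobi fields $J_{\tau,w}$ with boundary conditions $J_{\tau,w}(0)=w$, $J_{\tau,w}(\tau)=0$ over long intervals $\tau\to\pm\infty$, and the strict convexity of $\rho$ transverse to $\gamma$ produces a uniform gap $g(\tilde A_-w,w)-g(\tilde A_+w,w)\geq\epsilon\|w\|_g^2$ between the Green endomorphisms; transversality then gives hyperbolicity by Eberlein's theorem. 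Periodic variations alone do not yield this gap.

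\emph{Existence of the homoclinic (Stage 3).} This is the heart of the theorem, and you have only sketched an intended strategy while explicitly deferring the topological setup and the compactness argument. Moreover your dichotomy $b_1(M)\geq2$ versus $b_1(M)=1$ does not cover the cases correctly: the relevant split (as in the paper's Lemma~\ref{l:geodesic_with_finite_action}) is $\pi_1(M)\not\cong\Z$ versus $\pi_1(M)\cong\Z$. When $b_1(M)=1$ but $\pi_1(M)\not\cong\Z$, neither the Contreras--Paternain scheme (which needs two independent cohomology directions) nor a Bangert--Hingston argument in the universal cover applies as you describe; the paper instead minimizes the nonnegative action $a$ over loops in the double coset class $GhG$, where $G$ is the image of $\pi_1$ of a tubular neighborhood of $\gamma$ and $h\notin G$, and the key step is ruling out that the minimizers drift entirely along $\gamma$ (the $s_n\to\infty$, $t_n-s_n\to\infty$ argument). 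When $\pi_1(M)\cong\Z$, the paper's min-max uses the nontrivial classes $h_n\in\pi_k(\Omega_n)$ with $j_{n*}h_n\neq0$ from Bangert--Hingston, a positive lower bound on the min-max values $a_n$, monotonicity $b_{n+1}\leq b_n$, and—crucially—expansiveness of the flow near the already-hyperbolic orbit $\dot\gamma$ to locate points $\dot\zeta_n(t_n)$ uniformly away from $\dot\gamma$ before passing to a limit geodesic with $\int F<\infty$. None of these ingredients (the specific homotopy classes, the uniform positive lower bound on the min-max levels, the mechanism preventing the critical orbits from collapsing onto $\gamma$) appear in your outline, and without them the construction does not go through. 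Finally, note that the paper avoids any Palais--Smale discussion by working with the finite-dimensional-in-spirit nonnegative functional $a$ and direct limiting arguments, and then deduces the asymptotics of $\zeta$ purely from $\int_{-\infty}^{\infty}F(\dot\zeta)\,dt<\infty$ and the positivity of $F$ off $T M|_\gamma$, rather than from general Ma\~n\'e-set theory.
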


\begin{proof}
Let $\gamma$ be a $(g_0,\sigma)$-minimal closed geodesic, and as usual we denote by $\tau_\gamma>0$ its minimal period. In particular, $\gamma$ is a simple closed geodesic without conjugate points. We set $x_0:=\gamma(0)=\gamma(\tau_\gamma)$. For each integer $n\geq1$, we see the loop $\gamma|_{[0,n\tau_\gamma]}$ as a representative of an element of the fundamental group $\pi_1(M,x_0)$.  

By the already mentioned theorem of Fathi and Siconolfi \cite{Fathi:2004aa}, there exists a $C^{1}$ function $u:M\to\R$ that satisfies the Hamilton-Jacobi inequality 
$H_0\circ du\leq\alpha_{g_0}(\sigma)$, 
where $H_0:T^*M\to\R$ is the Tonelli Hamiltonian dual to the Tonelli Lagrangian
\[
L_0:TM\to\R,
\qquad 
L_0(v)=\tfrac12 \|v\|_{g_0}^2 - \sigma(v).
\]
This, together with the Fenchel inequality $H_0(p)+L_0(v)\geq p(v)$, implies
\begin{align*}
F_0(v):=L_0(v) - du(\pi(v))v + \alpha_{g_0}(\sigma)\geq0,\qquad\forall v\in T_xM,
\end{align*}
where $\pi:TM\to M$ is the base projection. Since
\begin{align*}
 \int_{0}^{\tau_\gamma} F_0(\dot\gamma(t))\,dt
 =
 \tau_\gamma\big( \A_{g_0,\sigma}(\gamma) + \alpha_{g_0}(\sigma) \big) = 0,
\end{align*}
we have $F_0\circ\dot\gamma\equiv0$.

Let $\rho:M\to[0,\infty)$ be any smooth function such that $\rho(x)=0$ and $d^2\rho(x)[v,v]>0$ for all $x\in\gamma$ and $v\in T_xM\setminus\{0\}$ orthogonal to $\gamma$, and $\rho(y)>0$ for all $y\in M\setminus\gamma$.
By Proposition~\ref{p:make_hyperbolic}, $\gamma$ is a hyperbolic closed geodesic for the Riemannian metric \[g:=e^\rho g_0.\] 
Since $\A_{g,\sigma}\geq \A_{g_0,\sigma}$ and $\A_{g,\sigma}(\gamma)= \A_{g_0,\sigma}(\gamma)$, we infer that $\gamma$ is a $(g,\sigma)$-minimal closed geodesic, and therefore
\begin{align*}
 \alpha:=\alpha_{g_0}(\sigma)=\alpha_{g}(\sigma)=-\A_{g,\sigma}(\gamma)>0.
\end{align*}
We introduce the non-negative continuous Tonelli Lagrangian
\begin{align*}
 F:TM\to[0,\infty),\qquad F(v):=\tfrac12 \|v\|_{g}^2 - \sigma(v) - du(x)v + \alpha.
\end{align*}
Notice that $F$ is identically equal to $\alpha>0$ along the zero section, and $F(v)>0$ for all $v\in TM$ such that $\pi(v)\not\in\gamma$. Moreover, for each $t\in[0,\tau_\gamma]$, we have $F(r\dot\gamma(t))=0$ if and only if $r=1$.

For each compact interval $[\tau_1,\tau_2]\subset\R$ and for each $W^{1,2}$ curve $\zeta:[\tau_1,\tau_2]\to M$, we set
\begin{align*}
 a(\zeta)
 :=
 \int_{\tau_1}^{\tau_2} F(\dot\zeta(t))\,dt\geq0.
 \end{align*}
Notice that $a(\gamma|_{[0,\tau_\gamma]})=0$. The following lemma is crucial, and requires two distinct proofs for the cases $\pi_1(M,x_0)\not\cong\Z$ and $\pi_1(M,x_0)\cong\Z$. From now on, all the geodesics are associated with the Riemannian metric $g$, unless we specify otherwise. Two geodesics are said to be geometrically distinct when their images into the Riemannian manifold are distinct.

\begin{Lemma}\label{l:geodesic_with_finite_action}
There exists a geodesic $\zeta:\R\to M$ geometrically distinct from $\gamma$ such that 
\begin{align}
\label{e:bound_integral}
 \int_{-\infty}^\infty F(\dot\zeta(t))\,dt < \infty.
\end{align}
\end{Lemma}

Postponing the proof of this lemma, let us first complete the proof of Theorem~\ref{t:homoclinic}. We shall show that $\zeta$ is a homoclinic to the closed geodesic $\gamma$.
For each $\epsilon>0$, we denote by $N_\epsilon\subset M$ the open tubular neighborhood of $\gamma$ of radius $\epsilon>0$, measured with respect to the Riemannian metric $g$. Since $F$ is strictly positive outside $TN_\epsilon$ and coercive, in particular we have
\begin{align*}
\delta_\epsilon:=\min_{T(M\setminus N_{\epsilon})} F > 0.
\end{align*}
Assume that, on some interval $[t_1,t_2]\subset\R$, the geodesic arc $\zeta|_{[t_1,t_2]}$ crosses the shell $N_{2\epsilon}\setminus N_{\epsilon}$, so that it has length 
\[\|\dot\zeta\|_{g}(t_2-t_1)\geq\epsilon\] and action
\begin{align}
\label{e:crossing_action}
 a(\zeta|_{[t_1,t_2]}) \geq (t_2-t_1)\delta_\epsilon\geq\frac{\epsilon\, \delta_\epsilon}{\|\dot\zeta\|_{g}}=:\rho_\epsilon.
\end{align}
Since $F$ is continuous and non-negative, there exists $s_0>0$ such that 
\begin{align}
\label{e:action_almost_full}
a(\zeta|_{[-s_0,s_0]})
>
\int_{-\infty}^\infty F(\dot\zeta(t))\,dt
-\rho_\epsilon, 
\end{align}
and $s_1>s_0$ such that $F(\zeta(s_1))<\delta_\epsilon$. Therefore $\zeta(s_1)\in N_{\epsilon}$. The  inequalities~\eqref{e:crossing_action} and \eqref{e:action_almost_full} imply that $\zeta(t)\in N_{2\epsilon}$ for all $t>s_1$. Analogously, $\zeta(-t)\in N_{2\epsilon}$ for all $t>0$ large enough. Overall, by sending $\epsilon\to0$, this argument shows that the distance of $\zeta(t)$ to the closed geodesic $\gamma$ tends to 0 as $|t|\to\infty$. Therefore, $\dot\zeta$ must have the $\alpha$-limit and $\omega$-limit
\[
\alphalim\dot\zeta=r_\alpha\dot\gamma,
\qquad
\omegalim\dot\zeta=r_\omega\dot\gamma,
\]
where $|r_\alpha|=|r_\omega|=\|\dot\zeta\|_{g}/\|\dot\gamma\|_{g}$. Since $F(r\dot\gamma(t))>0$ for all $t\in[0,\tau_\gamma]$ and $r\neq1$, the finiteness of the integral~\eqref{e:bound_integral} implies $r_\alpha=r_\omega=1$. Therefore 
\[\alphalim\dot\zeta=\omegalim\dot\zeta=\dot\gamma,\] 
that is, $\zeta$ is a homoclinic to $\gamma$.
\end{proof}

\begin{proof}[Proof of Lemma~\ref{l:geodesic_with_finite_action} in the case $\pi_1(M,x_0)\not\cong\Z$]
Let $N$ be an open tubular neighborhood of the simple closed geodesic $\gamma$. We denote the inclusion by $i:N\hookrightarrow M$. Since $N$ is homotopy equivalent to a circle, it has fundamental group $\pi_1(N,x_0)\cong\Z$. Since $\pi_1(M,x_0)\not\cong\Z$ and $H^1(M;\R)\neq0$, the homomorphism $i_*:\pi_1(N,x_0)\to \pi_1(M,x_0)$ is not surjective (indeed this latter condition would be enough to carry out the remainder of the proof). We set 
 $G:=i_*(\pi_1(N,x_0))$,
and fix a homotopy class $h\in \pi_1(M,x_0)\setminus G$.
For each $T>0$, consider the loop space
\begin{align*}
 \Omega_T
 :=
 \Big\{
 \zeta:[0,\tau]\ttoup^{W^{1,2}} M \ \Big|\ 
 \zeta(0)=\zeta(\tau)=x_0,\ 0<\tau\leq T,\ [\zeta]\in GhG
 \Big\}.
\end{align*}
Namely, $\Omega_T$ consists of those loops based at $x_0$, defined on an interval of length at most $T$, and representing a non-trivial element of the fundamental group $\pi_1(M,x_0)$ of the form $[\gamma]^jh[\gamma]^k$ for some $j,k\in\Z$.

The functional $a|_{\Omega_{T}}$ achieves its minimum at some geodesic loop $\zeta_T:[0,t_T]\to M$, with $0<t_T\leq T$, which is not necessarily unique. We choose one such minimizer with the highest possible period $t_T$, so that the function $T\mapsto t_T$ is non-decreasing.
We fix a constant $c>0$ large enough so that $F(v)\geq\tfrac14\|v\|_{g}^2-c$ for all $v\in TM$,
and therefore 
\[
a(\zeta_T)\geq t_T\big(\tfrac14\|\dot\zeta_T\|_{g}^2-c\big).
\]
Since the function $T\mapsto a(\zeta_T)$ is non-increasing, we have
\[
a_\infty
 :=
 \lim_{T\to\infty} a(\zeta_T)<\infty,
\] 
and for all $T\geq1$  we have 
\begin{align}
\label{e:speed_bound}
 \frac14\|\dot\zeta_T\|_{g}^2
 \leq
 \frac{a(\zeta_T)}{t_T}+c\leq \frac{a(\zeta_1)}{t_1}+c.
\end{align}
Since $[\zeta_T]\in G h G$ and $h\not\in G$, we have that  $[\zeta_T]\not\in G$. Therefore, there exists $s_T$ such that $\zeta_T(s_T)\not\in N$. The uniform bound~\eqref{e:speed_bound} allows us to extract a diverging sequence $T_n\to\infty$ such that, if we set $\zeta_n:=\zeta_{T_n}$, $s_n:=s_{T_n}$, and $t_n:=t_{T_n}$, we have
\[
x_n:=\zeta_{n}(s_{n})\to x,
\qquad
v_n:=\dot \zeta_{n}(s_{n})\to v.
\]
Let $\zeta:\R\to M$ be the geodesic such that $\zeta(0)=x$ and $\dot\zeta(0)=v$.

We claim that
\begin{align*}
 \lim_{n\to\infty}\min\{s_n,t_n-s_n\}\to\infty.
\end{align*}
Assume by contradiction that $s_n$ is uniformly bounded from above. Up to extracting a subsequence, we have $s_n\to s>0$.  Since $a(\gamma|_{[0,\tau_\gamma]})=0$, we have 
\begin{align*}
 a(\gamma|_{[0,\tau_\gamma]}*\zeta_n|_{[0,s_n]})=a(\zeta_n|_{[0,s_n]}),
\end{align*}
where $*$ denotes the concatenation of paths. Notice that $\gamma|_{[0,\tau_\gamma]}*\zeta_n|_{[0,s_n]}$ is not a geodesic, since it has a corner at $\gamma(\tau_\gamma)=\zeta_n(0)$. For each $\epsilon>0$, we introduce the space
\begin{align*}
 \Upsilon_{n,\epsilon}:=\Big\{ \lambda:[-\epsilon,\epsilon]\ttoup^{W^{1,2}}M\ \Big|\ \lambda(-\epsilon)=\gamma(\tau_\gamma-\epsilon),\ \lambda(\epsilon)=\zeta_n(\epsilon) \Big\}.
\end{align*}
Since the geodesic arcs $\zeta_n|_{[0,s_n]}$ converge to $\zeta(\cdot-s)|_{[0,s]}$ in the $C^\infty$ topology on every compact subinterval of $[0,s)$, we can fix $\epsilon\in(0,\tau_\gamma)$ small enough so that $a|_{\Upsilon_{n,\epsilon}}$ has a unique minimizer $\lambda_n$, which is a geodesic arc contained in the tubular neighborhood $N$, and we have
\begin{align*}
 \delta:=\inf_{n\in\N} \Big( a(\gamma|_{[\tau_\gamma-\epsilon,\tau_\gamma]}*\zeta_n|_{[0,\epsilon]})-a(\lambda_n) \Big)
 >0.
\end{align*}
The concatenation 
\[
\kappa_n:=\gamma|_{[0,\tau_\gamma-\epsilon]}*\lambda_n*\zeta_n|_{[\epsilon,t_n]}
\in
\Omega_{t_n+\tau_\gamma}
\] 
represents the same element of the fundamental group as $\gamma*\zeta_n$, and therefore
\[ [\kappa_n]= [\gamma][\zeta_n]\in GhG.\]
However, if $n$ is large enough so that $|a(\zeta_n)-a_{\infty}|<\delta$, we have
\begin{align*}
 a(\kappa_n)
 \leq 
 a(\zeta_n)-\delta 
 <
 a_\infty,
\end{align*}
which contradicts the fact that $\min a|_{\Omega_{t_n+\tau_\gamma}}\geq a_\infty$. This proves that $s_n\to\infty$, and an analogous argument implies that $t_n-s_n\to\infty$.

For each $s>0$, we have $[s_n-s,s_n+s]\subset[0,t_n]$ for $n$ large enough, and
\begin{align*}
 a(\zeta|_{[-s,s]}) 
 = 
 \lim_{n\to\infty}
 a(\zeta_n|_{[s_n-s,s_n+s]})
 \leq
 \lim_{n\to\infty}
 a(\zeta_n)
 =a_\infty.
\end{align*}
Therefore
\[
\int_{-\infty}^\infty F(\dot\zeta(t))\,dt \leq a_\infty.
\qedhere
\]
\end{proof}

\begin{proof}[Proof of Lemma~\ref{l:geodesic_with_finite_action} in the case $\pi_1(M,x_0)\cong\Z$]
Since $M$ has dimension at least two and fundamental group $\pi_1(M,x_0)\cong\Z$, there exists a minimal integer $k\geq1$ such that the higher homotopy group $\pi_{k+1}(M,x_0)\neq0$ is non-trivial. Indeed, otherwise any continuous map $\beta:S^1\to M$ representing a generator of $\pi_1(M,x_0)$ would be a homotopy equivalence, whereas a closed manifold of dimension at least two cannot be homotopy equivalent to a manifold of dimension one.

In order to simplify the notation, we can assume without loss of generality that the simple closed geodesic $\gamma$ has unit speed $\|\dot\gamma\|_{g}\equiv1$ and minimal period $\tau_\gamma=1$.
Let $\tau$ be a positive integer that we will fix soon. For each integer $n\geq1$, we set
$\gamma_n:=\gamma|_{[0,n\tau]}$, and consider the based and free loop spaces
\begin{align*}
\Omega_n & := \big\{ \zeta:[0,n\tau]\ttoup^{W^{1,2}} M\ \big|\ \zeta(0)=\zeta(n\tau)=x_0\big\},\\
\Lambda_n & := \big\{ \zeta:[0,n\tau]\ttoup^{W^{1,2}} M\ \big|\ \zeta(0)=\zeta(n\tau)\big\}.
\end{align*}
The concatenation with $\gamma_1$ defines a homotopy equivalence
\[i_n:\Omega_n \to \Omega_{n+1},\qquad
\zeta\mapsto\gamma_1*\zeta.\]
We denote by $j_n:\Omega_n\hookrightarrow\Lambda_n$ the inclusion.
A topological result of Bangert and Hingston \cite[Lemmas 1 and 2]{Bangert:1984aa} implies that, for a suitable value of the integer $\tau$, there exist non-trivial homotopy classes $h_n\in\pi_k(\Omega_n,\gamma_n)$ such that $h_{n+1}=i_{n*}h_n$, and their images $q_n:=j_{n*}h_n\in\pi_k(\Lambda_n,\gamma_n)$ are nontrivial as well. We fix a basepoint $z_0$ in the unit sphere $S^k$. The representatives of $h_n$ are continuous maps of pointed spaces of the form $\Gamma:(S^k,z_0)\to(\Omega_n,\gamma_n)$, and analogously the representatives of $q_n$ are continuous maps of pointed spaces of the form $\Gamma:(S^k,z_0)\to(\Lambda_n,\gamma_n)$. We define the min-max values
\begin{equation}
\label{e:min_max_values}
\begin{split}
 b_n & := \inf_{[\Gamma]=h_n} \max a\circ \Gamma,\\
 a_n & := \inf_{[\Gamma]=q_n} \max a\circ\Gamma. 
\end{split}
\end{equation}
Since $q_n=j_{n*}h_n$, we have 
\begin{align}
\label{e:bn>cn}
b_n\geq a_n.
\end{align}
For each representative $\Gamma$ of $h_n$, the composition $i_n\circ\Gamma$ is a representative of $h_{n+1}$, and since $a(\gamma_1)=0$, we have 
\[a(i_n\circ\Gamma(z))=a(\gamma_1)+a(\Gamma(z))=a(\Gamma(z)),
\qquad\forall z\in S^k.\]
This implies
\begin{align}
\label{e:bn>bn+1}
 b_n\geq b_{n+1}.
\end{align}

For each $\zeta\in\Omega_n$, we denote by $\overline\zeta\in\Omega_n$ the same geometric curve parametrized proportionally to arc-length, so that 
\[
\int_{0}^{tn\tau} \|\dot{\overline\zeta}(s)\|_{g}\,ds
=
t
\int_{0}^{n\tau} \|\dot{\zeta}(s)\|_{g}\,ds,
\qquad\forall t\in[0,1].
\] 
The map 
\[u_n:\Lambda_n\to\Lambda_n,\qquad u_n(\zeta)=\overline\zeta\] 
is continuous and homotopic to the identity (as it was proved by Anosov \cite[Theorem~2]{Anosov:1980aa}). Moreover, $u_n(\Omega_n)\subset\Omega_n$, and we have $a(\zeta)\geq a(u_n(\zeta))$ for all $\zeta\in\Lambda_n$. This shows that in the min-max expressions~\eqref{e:min_max_values} we can equivalently restrict the infima over maps that further satisfy $\Gamma=u_n\circ\Gamma$, that is, such that each loop $\Gamma(z)$ is parametrized proportionally to arc-length.

We fix a constant $c>0$ large enough so that 
\[F(v)\geq \tfrac14\|v\|_{g}^2-c,\qquad\forall v\in TM.\]
For each $\zeta\in\Lambda_n$ parametrized proportionally to arc-length, since 
\[
a(\zeta)\geq n\tau\big(\tfrac14\|\dot\zeta\|_{g}^2-c\big),
\] 
we have the a priori bound
\begin{align}
\label{e:apriori_bound_dot_zeta}
 \tfrac14\|\dot\zeta\|_{g}^2
 \leq
 \frac{a(\zeta)}{n\tau}+c
 \leq a(\zeta)+c.
\end{align}

Let $N\subset M$ be an open tubular neighborhood of $\gamma$.   
For each representative $\Gamma$ of $q_n$, there exists a point $z\in S^k$ such that the loop $\Gamma(z)$ is not entirely contained in $N$. Indeed, consider the free loop space 
\[ \Upsilon_n:=\big\{ \zeta:[0,n\tau]\ttoup^{W^{1,2}} N\ |\ \zeta(0)=\zeta(n\tau) \big\}. \]
Since $N$ is homotopy equivalent to a circle,  the evaluation map $\ev:\Lambda_n\to M$, $\ev(\zeta)=\zeta(0)$ restricts to a homotopy equivalence $\ev|_{\CC_n}:\CC_n\to N$, where $\CC_n$ is the connected component of $\Upsilon_n$ containing $\gamma_n$. In particular, $\ev$ induces an isomorphism 
\begin{align*}
\ev_*:\pi_k(\CC_n,\gamma_n)\ttoup^{\cong} \pi_k( N , x_0)
\cong
\left\{
  \begin{array}{@{}ll}
    \Z, & \mbox{if }k=1,\vspace{5pt} \\ 
    0, & \mbox{otherwise}. 
  \end{array}
\right. 
\end{align*}
Since $\ev\circ j_n\equiv x_0$ and $q_n=j_{n*}h_n$, we infer that $\ev_* q_n = (\ev\circ j_n)_* h_n  = 0$. Since the homotopy class $q_n$ is non-zero, no representative $\Gamma$ of $q_n$ can have its image contained in~$\CC_n$.

We claim that 
\begin{align*}
\inf_{n} a_n > 0.
\end{align*}
Indeed, let $N_0\subset M$ be another open tubular neighborhood of $\gamma$ whose closure is contained in $N$, and let $\rho>0$ be the minimum distance from points of $\partial N_0$ to points of $\partial N$. In particular, any smooth curve that crosses the shell $N\setminus N_0$ must have length at least $\rho$. Here, the distances and the lengths are measured with respect to the Riemannian metric $g$. Since $F$ is strictly positive outside $N_0$ and is coercive, we have
\begin{align*}
f:= \min\big\{F(v)\ \big|\ \pi(v)\in{M\setminus N_0}\big\} > 0.
\end{align*}
Let $\Gamma=u_n\circ\Gamma$ be a representative of $q_n$ that is not too far from being optimal, meaning that
\begin{align*}
 \max_{z\in S^k} a(\Gamma(z))\leq a_n+1.
\end{align*}
We know that there exists $z\in S^k$ such that the loop $\zeta:=\Gamma(z)$ is not entirely contained in $N$. If $\zeta$ intersects the smaller tubular neighborhood $N_0$, then there exists an interval $[t_0,t_1]\subset[0,n]$ such that $\zeta|_{[t_0,t_1]}$ has length $\|\dot\zeta\|_{g}(t_1-t_0)\geq\rho$ and is contained in $M\setminus N_0$; the a priori bound~\eqref{e:apriori_bound_dot_zeta}, together with~\eqref{e:bn>bn+1} and \eqref{e:bn>cn}, implies 
\begin{align*}
t_1-t_0
\geq
\frac{\rho}{\|\dot\zeta\|_{g}}
\geq
\frac{\rho}{\sqrt{4(b_1+1+c)}},
\end{align*}
and therefore
\begin{align*}
a(\zeta)
\geq
a(\zeta|_{[t_0,t_1]})
\geq 
(t_1-t_0)f
\geq
\frac{\rho f}{\sqrt{4(b_1+1+c)}}.
\end{align*}
If instead $\zeta$ does not intersect $N_0$, then
\begin{align*}
a(\zeta)\geq n \tau f \geq f.
\end{align*}

Standard variational methods imply that $a_n$ is a critical value of $a|_{\Lambda_n}$. Therefore $a_n=a(\zeta_n)$ for some closed geodesic $\zeta_n\in\Lambda_n$ contained in the connected component of $\gamma_n$. In particular, $\zeta_n$ is a geodesic loop such that $\dot\zeta_n(0)=\dot\zeta_n(n\tau)$, and therefore from now on we will see it as an $n\tau$-periodic geodesic $\zeta_n:\R\to M$.
Since $a_n>0$, we have that $\zeta_n$ is geometrically distinct from $\gamma$. We now consider the unit-sphere tangent bundle 
\begin{align*}
 SM = \big\{v\in TM\ \big|\ \|v\|_g=1\big\}.
\end{align*}
Since the geodesic flow on $SM$ is expansive near the hyperbolic periodic orbit $\dot\gamma$ (see, e.g., \cite[Cor.~5.3.5]{Fisher:0aa}), there exists a neighborhood $U\subset SM$ of $\dot\gamma$ such that, for each $n\geq1$, there exists $t_n\in[0,n\tau]$ such that 
\[\frac{\dot\zeta_n(t_n)}{\|\dot\zeta_n(t_n)\|_{g}}\not\in U.\]
Let $v_n:=\dot\zeta_n(t_n)$ be the corresponding tangent vector. The a priori bound~\eqref{e:apriori_bound_dot_zeta} implies that the sequence $\|v_n\|_{g}$ is uniformly bounded from above. We claim that the sequence $\|v_n\|_{g}$ is also uniformly bounded from below by a positive constant. Indeed, since the continuous Tonelli Lagrangian $F$ is strictly positive along the zero section of $TM$, there exists $r>0$ small enough so that
\begin{align*}
\delta:= \min_{\|v\|_{g}\leq r} F(v)>0.
\end{align*}
If we had $\|v_n\|_{g}\leq r$ for some integer $n> b_1/(\delta\tau)$, then we would get the contradiction 
\[a_n=a(\zeta_n)\geq\delta n\tau> b_1 \geq b_n \geq a_n.\]
Overall, we obtained a compact interval $[r_1,r_2]\subset(0,\infty)$ such that $r_1\leq\|v_n\|_{g}\leq r_2$ for all integers $n\geq1$. Therefore, up to extracting a subsequence, we have $v_n\to v_\infty$ and $a_n\to a_\infty$. If $\zeta:\R\to M$ is the geodesic such that $\dot\zeta(0)=v_\infty$, then $\zeta_n(t_n+\cdot)\to\zeta$ in the $C^\infty$-topology on every compact set. Since $F$ is non-negative, for each $s>0$ we have
\begin{align*}
a(\zeta|_{[-s,s]})
=
\lim_{n\to\infty}
a(\zeta_n|_{[t_n-s,t_n+s]})
\leq
\lim_{n\to\infty}
a_n
=
a_\infty,
\end{align*}
and therefore
\[
\int_{-\infty}^\infty F(\dot\zeta(t))\,dt \leq a_\infty.
\qedhere
\]
\end{proof}

\begin{proof}[Proof of Theorem~\ref{mt:main}]
By Theorem~\ref{t:homoclinic}, there exists a Riemannian metric $g_1$ arbitrarily $C^\infty$ close to $g$ such that $\gamma$ is a hyperbolic closed geodesic of $(M,g_1)$ with a homoclinic. A theorem due to Petroll \cite[Prop.~2.4]{Burns:2002aa} implies that there exists a Riemannian metric $g_2$ that is arbitrarily $C^\infty$ close to $g_1$ such that $\gamma$ is a hyperbolic closed geodesic of $(M,g_2)$ with a transverse homoclinic (if $M$ is a surface, the analogous theorem for $C^2$ perturbations of the Riemannian metric was proved independently by Donnay \cite{Donnay:1995aa}).
\end{proof}

We now provide a slight generalization of Theorem~\ref{mt:main} that essentially allows its assumptions to be verified by a finite cover of the considered closed Riemannian manifold.

\begin{Thm}\label{t:main_cover}
Let $p:M\to M_0$ be a finite covering map of a closed manifold of dimension at least two, and $g_0$  a Riemannian metric on $M_0$. If $(M,p^*g_0)$ has a minimal closed geodesic $\gamma$, then there exists a Riemannian metric $g$ arbitrarily $C^\infty$-close to $g_0$ such that $\gamma$ is a hyperbolic closed geodesic of $p^*g$, and $p(\gamma)$ is a hyperbolic simple closed geodesic of $g$ with a transverse homoclinic.
\end{Thm}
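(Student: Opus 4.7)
The plan is to run the proof of Theorem~\ref{mt:main} equivariantly with respect to the finite deck group $\Gamma$ of $p:M\to M_0$, so that the conformal perturbation of Theorem~\ref{t:homoclinic} is performed on $M_0$ rather than on $M$, and its pullback automatically yields a perturbation on $M$ with the required properties for $\gamma$. Fix a non-zero $\sigma\in H^1(M;\R)$ for which $\gamma$ is $(p^*g_0,\sigma)$-minimal.

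The central geometric step is to show that $p(\gamma)\subset M_0$ is a simple closed geodesic, so that it can serve as the vanishing locus of a smooth conformal factor on $M_0$. By Mather's graph theorem $\gamma$ is simple on $M$, and each deck translate $\phi\gamma$ is a minimal simple closed geodesic (for $\phi^*\sigma$) of the same speed. At a putative intersection $\gamma(b)=\phi\gamma(a)$, the velocities $\dot\gamma(b)$ and $d\phi(\dot\gamma(a))$ fall into three cases: if they coincide, $\phi\gamma=\gamma$ as sets and $\phi$ lies in the set-stabilizer $H\leq\Gamma$ of $\gamma$; if they are opposite, $\phi$ fixes the midpoint $\gamma(\tfrac{a+b}{2})$, contradicting the freeness of the deck action; and if they are linearly independent, the Mather graph theorem applied to an appropriate convex combination of the minimizing measures $\mu_\gamma$ and $\mu_{\phi\gamma}$ yields a contradiction. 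Thus distinct translates of $\gamma$ are disjoint in $M$, $H$ acts on $\gamma$ by rotations, and $p(\gamma)$ is a simple closed geodesic on $M_0$.

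Next I choose a smooth non-negative $\tilde\rho:M_0\to[0,\infty)$ vanishing exactly on $p(\gamma)$ with $d^2\tilde\rho(x)[v,v]>0$ for $x\in p(\gamma)$ and $v$ orthogonal to $p(\gamma)$, and set $g:=e^{\tilde\rho}g_0$. The pullback $\rho:=p^*\tilde\rho$ is smooth, $\Gamma$-invariant, and vanishes exactly on $\Gamma\cdot\gamma$ with the required transverse Hessian along each translate, so Proposition~\ref{p:make_hyperbolic} makes $\gamma$ (and, by equivariance, every translate) a hyperbolic closed geodesic of $p^*g=e^\rho p^*g_0$, whence $p(\gamma)$ is hyperbolic on $(M_0,g)$. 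Running the argument of Theorem~\ref{t:homoclinic} on $(M,p^*g)$, the Fathi--Siconolfi non-negative Tonelli Lagrangian $F$ vanishes along $\dot\gamma$ (and possibly along other translates), and the variational construction of Lemma~\ref{l:geodesic_with_finite_action} produces a geodesic $\zeta:\R\to M$ with finite $F$-action, geometrically distinct from every element of $\Gamma\cdot\gamma$, whose asymptotics force $\alphalim\dot\zeta,\ \omegalim\dot\zeta\in\{\dot\gamma':\gamma'\in\Gamma\cdot\gamma\}$. Projecting down to $M_0$, both limits collapse to $\dot{p\gamma}$, so $p\circ\zeta$ is a homoclinic to the hyperbolic closed geodesic $p(\gamma)$ in $(M_0,g)$; Petroll's theorem \cite[Prop.~2.4]{Burns:2002aa} then yields a further arbitrarily $C^\infty$-small modification of $g$ making the homoclinic transverse while preserving hyperbolicity, and pulling back by $p$ completes the conclusion on $M$.

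The principal obstacle is the simplicity step: ruling out transverse crossings of distinct $\Gamma$-translates of $\gamma$ when $\sigma$ is not $\Gamma$-invariant requires a graph-property argument going beyond a single cohomology class. The relevant convex combination $\tfrac12(\mu_\gamma+\mu_{\phi\gamma})$ is a closed invariant probability measure but is not automatically minimizing for any single class, so some care is needed, potentially by passing to an intermediate cover on which the $\Gamma$-averaged class $|\Gamma|^{-1}\sum_\psi\psi^*\sigma$ becomes the class of reference, in order to extract the required injectivity of the projection to $M$.
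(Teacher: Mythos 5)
Your proposal reproduces the architecture of the paper's proof of Theorem~\ref{t:main_cover} step by step: simplicity of $p(\gamma)$ via Mather's graph theorem applied to the average $\tfrac12(\mu_\gamma+\mu_{\psi\circ\gamma})$, a conformal factor pulled back from $M_0$ so that Proposition~\ref{p:make_hyperbolic} can be applied to $p(\gamma)$ in the base, a rerun of the proof of Theorem~\ref{t:homoclinic} upstairs producing a geodesic with finite $F$-action which is now a heteroclinic between deck translates of $\gamma$, projection to $M_0$ to get a homoclinic to $p(\gamma)$, and Petroll's theorem applied downstairs. (Your separate treatment of the ``opposite velocity'' case via freeness of the deck action is harmless but not needed in the paper, which treats any two distinct vectors over the same base point uniformly.)

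The only divergence is the step you flag as the principal obstacle, and there the comparison is the interesting part. The paper does not do the extra work you anticipate: it simply asserts that both $\mu_\gamma$ and $\mu_{\psi\circ\gamma}$ are $(p^*g_0,\sigma)$-minimal, hence so is their average, and invokes the graph theorem --- exactly your third case, with the minimality of $\mu_{\psi\circ\gamma}$ taken as immediate from $\psi$ being an isometry. Your worry is substantive: since $\rho(\mu_{\psi\circ\gamma})=\psi_*\rho(\mu_\gamma)$, minimality of $\mu_{\psi\circ\gamma}$ for the \emph{same} class $\sigma$ amounts to $\langle\sigma,\psi_*\rho(\mu_\gamma)\rangle=\langle\sigma,\rho(\mu_\gamma)\rangle$, which is not automatic when $\sigma$ is not deck-invariant; in general $\mu_{\psi\circ\gamma}$ is only $(\psi^{-1})^*\sigma$-minimal. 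A concrete test case: on the orientation double cover $T^2\to K$ of a flat Klein bottle, with deck involution $\psi(x,y)=(x+\tfrac12,-y)$ and $\sigma=dx+dy$, the diagonal geodesic $\gamma(t)=(t,t)$ is $\sigma$-minimal, while $\psi\circ\gamma$ has velocity $(1,-1)$, is minimal only for $\psi^*\sigma=dx-dy$, and crosses $\gamma$ transversally at $(\tfrac14,\tfrac14)$; consequently $p(\gamma)$ has a transverse self-intersection, so the disjointness-of-translates claim (and with it the simplicity of $p(\gamma)$ demanded by the conclusion) can genuinely fail without some invariance of $\sigma$. Note also that your suggested repair does not obviously work: in this example $\gamma$ is not minimal for the deck-averaged class ($dx$), so averaging $\sigma$ or passing to an intermediate cover does not restore the graph-theorem argument. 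So at this step your proposal and the paper coincide in strategy, but neither supplies a justification valid for non-invariant $\sigma$; a genuine fix seems to require either an invariance hypothesis on $\sigma$ (e.g.\ $\sigma\in p^*H^1(M_0;\R)$), or weakening the conclusion to produce a transverse homoclinic to \emph{some} hyperbolic closed geodesic of a nearby metric rather than to $p(\gamma)$ itself, which is what the application in Corollary~\ref{c:multiplicity} actually uses.
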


\begin{proof}
The proof is almost identical to the one of Theorem~\ref{t:homoclinic}, except for a few details.
Being $(p^*g_0,\sigma)$-minimal, the closed geodesic $\gamma:\R\to M$ is simple. Namely, $\gamma=\gamma(\tau_\gamma+\cdot)$ for some minimal period $\tau_\gamma>0$, and $\gamma|_{[0,\tau_\gamma)}$ is an injective map. We claim that, for each Deck transformation $\psi:M\to M$, the closed geodesic $\eta:=\psi\circ\gamma$ is either disjoint from $\gamma$ or is of the form $\eta=\gamma(\tau+\cdot)$ for some $\tau>0$. Indeed, assume by contradiction that there exist distinct $t_1,t_2\in[0,\tau_\gamma)$ such that $y:=\eta(t_1)=\gamma(t_2)$ but $\dot\eta(t_1)\neq\dot\gamma(t_2)$. Since both invariant measures $\mu_{\gamma}$ and $\mu_{\eta}$ are $(p^*g_0,\sigma)$-minimal, so is their average $\mu:=\tfrac12(\mu_\gamma+\mu_\eta)$. But the tangent vectors $\dot\eta(t_1),\dot\gamma(t_2)\in\supp(\mu)$ are based at the same point $y$, and therefore $\pi|_{\supp(\mu)}$ is not injective, contradicting Mather's graph theorem \cite[Theorem~2]{Mather:1991aa}.

This implies that $\gamma_0:=p\circ\gamma$ is also a simple closed geodesic for the Riemannian metric $g_0$ (although $\tau_\gamma$ may be a multiple of the minimal period of $\gamma_0$). 
We can now carry out word by word the proof of Theorem~\ref{t:homoclinic}, with the only difference that here we  apply Proposition~\ref{p:make_hyperbolic} to the simple closed geodesic $\gamma_0$ in the base manifold, and therefore we obtain the conformal factor $\rho$ of the form $\rho=\rho_0\circ p$, where $\rho_0:M_0\to[0,\infty)$ is a suitable function vanishing on $\gamma_0$ and strictly positive outside $\gamma_0$. We end up with a Riemannian metric $g_1=e^{\rho_0}g_0$ on $M_0$ arbitrarily $C^\infty$-close to $g_0$ such that $\gamma_0$ is a hyperbolic simple closed geodesic for the Riemannian metric $g_1$, and therefore $\gamma$ is a hyperbolic simple closed geodesic for the Riemannian metric $p^*g_1$. Instead of vanishing only on $\gamma$ as in the proof of Theorem~\ref{t:homoclinic}, the function $\rho$ here vanishes on all the images of $\gamma$ under Deck transformations. Therefore, at the end of the proof, instead of a homoclinic to $\gamma$, we obtain a heteroclinic $\zeta$ from $\gamma$ to $\psi\circ\gamma$, for some Deck transformation $\psi:M\to M$. Nevertheless, its base projection $\zeta_0:=p\circ\zeta$ is a homoclinic to $\gamma_0$. We then conclude the proof by applying Petroll's theorem \cite[Prop.~2.4]{Burns:2002aa} to $\gamma_0$, obtaining a Riemannian metric $g_2$ arbitrarily $C^\infty$ close to $g_1$ with respect to which $\gamma_0$ is a hyperbolic closed geodesic with a transverse homoclinic.
\end{proof}

For a group $G$, we denote its derived series by $G_n$, for $n\geq0$. These groups are defined inductively as $G_0=G$ and $G_{n+1}=[G_n,G_n]$, where this latter group is the commutator subgroup of $G_n$. We denote by $|G:G_n|$ the index of the derived subgroup $G_n$. As in Section~\ref{s:intro}, we denote by $\GG^k(M)$ the space of smooth Riemannian metrics on a closed manifold $M$, endowed with the $C^k$ topology.

\begin{Cor}\label{c:multiplicity}
Let $M$ be a closed connected manifold of dimension at least two such that the index $|\pi_1(M):\pi_1(M)_n|$ is infinite for some $n\geq1$. Then, for each $2\leq k\leq\infty$, there exists an open and dense subset of $\GG^k(M)$ such that every Riemannian metric therein admits infinitely many closed geodesics of arbitrarily large length.
\end{Cor}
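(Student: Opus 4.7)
The plan is to reduce Corollary~\ref{c:multiplicity} to the dichotomy underlying the already-established Corollary~\ref{mc:multiplicity}, by lifting to a suitable finite cover and invoking Theorem~\ref{t:main_cover} in place of Theorem~\ref{mt:main}. First I would extract from the hypothesis on the derived series a finite cover $p : \widetilde M \to M$ with $H^1(\widetilde M; \R) \neq 0$. Setting $G := \pi_1(M)$, the multiplicativity of indices along the derived chain gives $|G : G_n| = \prod_{j=1}^{n} |G_{j-1} : G_j|$, and $|G : G_n| = \infty$ forces some $|G_{i-1} : G_i|$ to be infinite. Choosing $i$ minimal ensures $|G : G_{i-1}| < \infty$; the corresponding finite cover $p : \widetilde M \to M$ then has $\pi_1(\widetilde M) \cong G_{i-1}$ with abelianization $G_{i-1}/G_i$ of infinite order, so the finitely generated abelian group $H_1(\widetilde M; \Z)$ has positive rank and $H^1(\widetilde M; \R) \neq 0$.

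Given $g_0 \in \GG^k(M)$, I would next apply the Aubry--Mather dichotomy to $(\widetilde M, \widetilde g_0)$ with $\widetilde g_0 := p^* g_0$. If some non-zero $\sigma \in H^1(\widetilde M; \R)$ admits no $\sigma$-minimal closed geodesic for $\widetilde g_0$, Theorem~\ref{t:aperiodic} yields closed geodesics $\widetilde\gamma_k$ with minimal periods $\widetilde\tau_k \to \infty$. Their projections $\gamma_k := p \circ \widetilde\gamma_k$ are closed geodesics of $(M, g_0)$ whose minimal periods $\tau_k$ satisfy $\widetilde\tau_k/\tau_k \leq \deg(p)$, so $\tau_k \to \infty$; since only finitely many $\widetilde\gamma_k$ can project to any single $\gamma$, infinitely many of the $\gamma_k$ must be geometrically distinct. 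Otherwise there is a non-zero $\sigma$ and a $\sigma$-minimal closed geodesic $\widetilde\gamma$ on $\widetilde M$, and Theorem~\ref{t:main_cover} supplies a metric $g \in \GG^k(M)$ arbitrarily $C^\infty$-close to $g_0$ for which $p(\widetilde\gamma)$ is a hyperbolic simple closed geodesic of $g$ with a transverse homoclinic, producing a horseshoe for the geodesic flow of $g$.

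Finally, to upgrade this pointwise conclusion to an open dense subset, I would take $\VV_k \subset \GG^k(M)$ to consist of those metrics whose geodesic flow carries a horseshoe: by structural stability $\VV_k$ is $C^2$-open and hence $C^k$-open, and every $g \in \VV_k$ admits infinitely many closed geodesics of arbitrarily large length. The density of $\VV_k$ in the ``Case~B'' regime above is immediate from Theorem~\ref{t:main_cover}. The main technical obstacle is the ``Case~A'' regime, where the cover admits no minimizing closed geodesic: the plan, parallel to the argument behind Corollary~\ref{mc:multiplicity}, is to show that Case~A is itself non-robust, so that an arbitrarily small $C^\infty$-perturbation of $g_0$ either produces a minimizing closed geodesic upstairs (returning to Case~B and hence to $\overline{\VV_k}$) or preserves infinitely many hyperbolic closed geodesics downstairs that persist stably under further perturbation, placing $g_0$ in the interior of the multiplicity set. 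Taking the union with $\VV_k$ then furnishes the required open dense subset.
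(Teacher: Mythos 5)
Your reduction to the finite cover and the Aubry--Mather dichotomy is exactly the paper's route: the minimal $i$ with $|G_{i-1}:G_i|=\infty$ gives a finite cover $p:\widetilde M\to M$ of a closed manifold with positive first Betti number, and then either Theorem~\ref{t:aperiodic} applies upstairs (and the projection argument you sketch, with minimal periods dropping by at most a factor $\deg(p)$, is correct) or Theorem~\ref{t:main_cover} produces a transverse homoclinic after an arbitrarily $C^\infty$-small perturbation. The gap is in your final step, which is precisely the content of the corollary beyond a pointwise statement. You propose to handle the ``Case A'' regime (no minimal closed geodesic on the cover) by showing that this regime is \emph{non-robust}, or alternatively that the closed geodesics it supplies ``persist stably under further perturbation''. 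Neither horn is justified: nothing in the paper (or in your argument) shows that an arbitrarily small perturbation of a Case~A metric creates a minimal closed geodesic upstairs, and the closed geodesics produced by Theorem~\ref{t:aperiodic} are action minimizers in loop spaces, with no hyperbolicity or non-degeneracy, so they have no reason to persist under perturbation. As written, your open set $\VV_k$ (horseshoe metrics) together with this unproven dichotomy does not yield a dense set.

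The point you are missing is that no such robustness statement is needed. Let $\mathcal{A}\subset\GG^k(M)$ be the set of metrics $g$ such that $(\widetilde M,p^*g)$ has a minimal closed geodesic, $\mathcal{H}$ the (open) set of metrics with a hyperbolic closed geodesic carrying a transverse homoclinic, and $\mathcal{I}$ the set of metrics with closed geodesics of arbitrarily large length. Theorem~\ref{t:main_cover} gives $\mathcal{A}\subset\overline{\mathcal{H}}$, and Theorem~\ref{t:aperiodic} (applied upstairs, then projected) gives $\GG^k(M)\setminus\mathcal{A}\subset\mathcal{I}$ \emph{without any perturbation}. Now simply set $\mathcal{B}:=\GG^k(M)\setminus\overline{\mathcal{A}}$: this is open by definition, contained in $\GG^k(M)\setminus\mathcal{A}\subset\mathcal{I}$, and
\begin{align*}
\overline{\mathcal{H}\cup\mathcal{B}}
=\overline{\mathcal{H}}\cup\overline{\mathcal{B}}
\supseteq\overline{\mathcal{A}}\cup\big(\GG^k(M)\setminus\overline{\mathcal{A}}\big)
=\GG^k(M),
\end{align*}
so $\mathcal{H}\cup\mathcal{B}$ is the required open and dense subset of $\mathcal{I}$. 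In other words, metrics that are robustly in Case~A are already (openly) in $\mathcal{I}$, and metrics that are not can be approximated by metrics in $\mathcal{A}$, hence by metrics in $\mathcal{H}$; whether Case~A is robust or not never has to be decided. With this point-set observation replacing your last paragraph, the rest of your argument goes through and coincides with the paper's proof.
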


\begin{Remark}
We recall that $H_1(M;\Z)\cong\pi_1(M)/\pi_1(M)_1$ according to Hurewicz theorem. 
Since $M$ is a closed manifold, the homology group $H_1(M;\Z)$ is finitely generated. This implies that the index $|\pi_1(M):\pi_1(M)_1|$ is infinite if and only if the first Betti number 
$\rank (H_1(M;\Z))$
is non-zero. Therefore, Corollary~\ref{mc:multiplicity} directly follows from Corollary~\ref{c:multiplicity}.
\end{Remark}

\begin{proof}[Proof of Corollary~\ref{c:multiplicity}]
Let $G_n$, $n\geq0$, be the derived series of the fundamental group $\pi_1(M)$. We have an associated sequence of normal covering spaces 
\[...\to M_2\to M_1\to M_0=M\] 
with fundamental groups $\pi_1(M_n)=G_n$. The quotient $G_{n}/G_{n+1}$ is the group of Deck transformations of the covering $M_{n+1}\to M_n$. 
Let $n\geq0$ be the minimal integer such that $G_n/G_{n+1}$ is infinite, which exists by the assumption of the corollary. Notice that $p:M_n\to M$ is a finite covering. Therefore $M_n$ is a closed manifold, and by our choice of $n$ it has infinite homology group $H_1(M_n;\Z)\cong G_{n}/G_{n+1}$. Since $H_1(M_n;\Z)$ is finitely generated, the first Betti number $\rank(H_1(M_n;\Z))$ is non-zero.

We fix an integer $k\geq2$, and denote by $\mathcal{I}\subset\GG^k(M)$ the subspace of those Riemannian metrics $g$ on $M$ having closed geodesics of arbitrarily large length. We need to show that $\mathcal{I}$ contains an open and dense subset of $\GG^k(M)$. We denote by $\mathcal{H}\subset\GG^k(M)$ the open subspace of those Riemannian metrics $g$ on $M$ having a hyperbolic closed geodesic with a transverse homoclinic. 
As we already mentioned in Section~\ref{s:intro}, classical results from hyperbolic dynamics imply that
\begin{align*}
 \mathcal{H}\subset\mathcal{I}.
\end{align*}
We denote by $\mathcal{A}\subset\GG^k(M)$ the subspace of those Riemannian metrics $g$ on $M$ such that $(M_n,p^*g)$ admits a minimal closed geodesic. 
Theorems~\ref{t:aperiodic} and~\ref{t:main_cover} imply that
\begin{align*}
\mathcal{A}\subset\overline{\mathcal{H}},
\qquad
\GG^k(M)\setminus\mathcal{A}\subset\mathcal{I}.
\end{align*}
We define $\mathcal{B}:=\GG^k(M)\setminus\overline{\mathcal{A}}$. We have
\begin{align*}
 \overline{ \mathcal{H}\cup\mathcal{B} }
 =
 \overline{ \mathcal{H}} \cup \overline{\mathcal{B} }
 \supseteq
 \overline{\mathcal{A}} \cup (\GG^k(M)\setminus\overline{\mathcal{A}})
 =
 \GG^k(M).
\end{align*}
We thus have an open and dense subset $\mathcal{H}\cup\mathcal{B}$ of $\GG^k(M)$ that is contained in~$\mathcal{I}$.
\end{proof}

\appendix

\section{Making closed geodesics hyperbolic}
\label{s:hyperbolic}

In this appendix we shall provide a proof of the following statement, which is employed in the proof of Theorem~\ref{mt:main}. We recall that a closed geodesic $\gamma:\R\to M$ of minimal period $\tau_\gamma>0$ is called \emph{simple} when $\gamma|_{[0,\tau_\gamma)}$ is an injective map.

\begin{Prop}\label{p:make_hyperbolic}
Let $\gamma$ be a simple closed geodesic without conjugate points in a closed Riemannian manifold $(M,g)$ of dimension at least two. Then $\gamma$ is a hyperbolic closed geodesic with respect to the conformal Riemannian metric $e^\rho g$, for any smooth function $\rho:M\to[0,\infty)$ such that $\rho(x)=0$ and $d^2\rho(x)[v,v]>0$ for all $x\in\gamma$ and $v\in T_xM\setminus\{0\}$ orthogonal to $\gamma$.
\end{Prop}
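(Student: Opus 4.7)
The argument splits into a computation and a dynamical comparison. First, since the conformal factor $\rho\ge 0$ vanishes on $\gamma$, its differential $d\rho$ vanishes there as well, and the standard formula for the Levi-Civita connection under a conformal change collapses to the identity along $\gamma$. Hence $\gamma$ remains a geodesic of $\tilde g:=e^\rho g$, at the same constant speed. Differentiating the same formula to compute the curvature tensor and evaluating at points of $\gamma$, where $\rho=d\rho=0$, one finds that the perpendicular Jacobi operator along $\gamma$ in the metric $\tilde g$ is
\[
\tilde{\mathcal R}(t)\;=\;\mathcal R(t)-B(t),\qquad B(t):=\tfrac12\,\mathrm{Hess}(\rho)\bigr|_{T_{\gamma(t)}\gamma^\perp},
\]
where $\mathcal R(t)$ is the original normal Jacobi operator. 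Our hypothesis on $\rho$ makes $B(t)$ strictly positive definite at every $t$.

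The heart of the proof is a matrix Riccati comparison. Writing $U=(\nabla_t J)J^{-1}$ for a Lagrangian matrix Jacobi field, the symmetric Riccati equation reads $U'=-U^2-\mathcal R$; the absence of conjugate points of $\gamma$ in $(M,g)$ supplies a $\tau_\gamma$-periodic symmetric global solution $U^*$, the Green operator of the original system. For the perturbed equation, whose Riccati is $V'=-V^2-\mathcal R+B$, the difference $W:=V-U^*$ satisfies
\[
W'\;=\;B-W^2-U^*W-WU^*.
\]
Starting from $W(t_0)=0$, positive-definiteness of $B$ immediately drives $W$ into the positive cone; at any purported null vector $\xi$ of $W$ at a later time one would have $\langle W'\xi,\xi\rangle=\langle B\xi,\xi\rangle>0$, so $W$ cannot lose positivity, and the quadratic $-W^2$ term precludes blow-up. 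The resulting global solution $V=U^*+W$ of the perturbed Riccati satisfies $V>U^*$ for $t>t_0$. Running the analogous construction backward in time, and using the $\tau_\gamma$-periodicity of $U^*$ to rule out an equality $V^\pm(t_0)=U^*(t_0)$ at any single time (such an equality would, by the $W$-equation, force a strict inequality one period later, contradicting periodicity), one obtains a strictly separated pair of $\tau_\gamma$-periodic global Riccati solutions $V^+>U^*>V^-$, namely the forward and backward Green operators of the perturbed system.

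The strict gap $V^+(t)>V^-(t)$ identifies two distinct, transverse, invariant Lagrangian subbundles of the perpendicular Poincar\'e flow along $\gamma$, and the monotone construction of $V^\pm$ exhibits them as asymptotically attracting and repelling under iteration of the perturbed matrix Riccati Poincar\'e map. A classical Birkhoff-type argument converts this asymptotic behaviour into exponential contraction on the forward Green bundle and exponential expansion on the backward one. This forces every eigenvalue of the perpendicular Poincar\'e map of $\gamma$ in the metric $e^\rho g$ to lie strictly off the unit circle, which is precisely the hyperbolicity of $\gamma$. The most delicate step is this final implication---upgrading \emph{strictly separated invariant Lagrangian subbundles} to \emph{exponential hyperbolicity}---which in higher dimensions hinges crucially on the attracting/repelling character of $V^\pm$ furnished by the monotone Riccati comparison, rather than on transversality alone.
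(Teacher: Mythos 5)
Your Riccati-comparison route is a legitimate variant of the paper's argument: where the paper compares the Green endomorphisms $A_\pm$ and $\tilde A_\pm$ of $g$ and $e^\rho g$ through the index form, you compare solutions of the Riccati equations $U'=-U^2-\mathcal{R}$ and $V'=-V^2-\mathcal{R}+B$; these are two renditions of the same monotonicity, your conformal computation ($B=\tfrac12\mathrm{Hess}\,\rho|_{\gamma^\perp}>0$, since $\rho$ and $d\rho$ vanish along $\gamma$) is correct, and the equation $W'=B-W^2-U^*W-WU^*$ together with the positivity propagation at a null vector is sound. The target --- that the two Green spaces of $\gamma$ for $e^\rho g$ intersect trivially --- is exactly what the paper establishes.

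There are, however, two genuine gaps. First, and most seriously, your final step, upgrading strictly separated Green subbundles to hyperbolicity, is precisely Eberlein's theorem (Theorem~\ref{t:Eberlein}, \cite{Eberlein:1973aa}), a nontrivial result which the paper simply cites; your ``classical Birkhoff-type argument'' is not a proof of it, and the attracting/repelling character of $V^\pm$ that it is supposed to rest on has not been established: your comparison yields monotonicity of the approximating family in the parameter (and separation from $U^*$), which is not dynamical attraction under the Riccati flow --- indeed any exponential attraction statement is essentially equivalent to the hyperbolicity you are trying to prove, and, as you yourself note, transversality of two invariant Lagrangian subbundles alone does not suffice. Either cite Eberlein or supply that argument. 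Second, the objects $V^\pm$ are asserted rather than constructed: you need $\gamma$ to be without conjugate points for $e^\rho g$ (it follows from the index-form inequality $\tilde h_\tau\geq h_\tau$, but must be said) so that the perturbed Green operators exist and are $\tau_\gamma$-periodic; you then need a limiting argument over the blow-up solutions to place them on either side of $U^*$, and finally \emph{directional} strict separation: your parenthetical only excludes the full matrix equality $V^\pm(t_0)=U^*(t_0)$ at a single time, whereas triviality of $\tilde G_+\cap\tilde G_-$ requires that the difference of the two Green endomorphisms have no null vector. The directional version of your touching argument (apply it to $f(t)=\langle W(t)\xi,\xi\rangle\geq0$ with $W$ periodic, so $f(t_0)=0$ forces $f'(t_0)=0$ against $f'(t_0)=\langle B\xi,\xi\rangle>0$) does give this, but it is not in your text; the paper instead extracts a uniform gap $\epsilon>0$ in the index-form comparison. (Also, with the paper's conventions the forward Green operator of $e^\rho g$ lies \emph{below} $U^*$ and the backward one above, so your labels are swapped; this is harmless.) With these points supplied, your approach goes through.
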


Proposition~\ref{p:make_hyperbolic} guarantees that, given a simple closed geodesic $\gamma$ without conjugate points, there exists an arbitrarily $C^\infty$-small conformal perturbation of the Riemannian metric that makes $\gamma$ hyperbolic. An analogous result for perturbations with potentials of Tonelli Hamiltonians was proved in \cite{Contreras:1999ab}.

\subsection{Green spaces}\label{s:Green_spaces}
Let us recall some basic facts from geodesic dynamics (for the details, we refer the reader to, e.g., \cite{Paternain:1999aa,Knieper:2002aa,Guillarmou:2025aa}).
Let $(M,g)$ be a closed Riemannian manifold of dimension at least two. We denote its sphere tangent bundle of radius $r>0$ by 
\[S^r\!M=\big\{v\in TM\ \big|\ \|v\|_g=r\big\},\] 
and the geodesic flow  by $\phi^t:S^rM\to S^rM$. The orbits of $\phi^t$ are of the form $\phi^t(\dot\gamma(0))=\dot\gamma(t)$, where $\gamma:\R\to M$ is a geodesic parametrized with speed $\|\dot\gamma\|_g\equiv r$. Without loss of generality, throughout this section we shall always assume that all geodesics are parametrized with speed $r=1$, and simply write $SM=S^1\!M$.

Let $\gamma:\R\to M$ be a geodesic, so that $\dot\gamma(t)=\phi^t(v)$ for $v=\dot\gamma(0)\in S_xM$.
We introduce the vector subspace $Z:=d\pi(v)^{-1}\langle v\rangle^\bot\subset T_v(SM)$, where $\pi:SM\to M$ is the base projection, and $\langle v\rangle^\bot\subset T_xM$ is the orthogonal complement to $v$. As it is common, we identify
\begin{equation}
\label{e:identification_TSM}
\begin{split}
Z & \equiv \langle v\rangle^\bot\times \langle v\rangle^\bot,\\
 \dot J(0) & \mapsto (J(0),\nabla_t J|_{t=0}),
\end{split}
\end{equation}
where $J$ is any Jacobi field orthogonal to $\gamma$, and $\nabla_t J$ is its covariant derivative with respect to the Levi-Civita connection.
We denote by $V:=\ker(d\pi)$ the vertical sub-bundle of $T(SM)$. By the identification~\eqref{e:identification_TSM}, we have $V_v\equiv\{0\}\times  \langle v\rangle^\bot$.

We assume that $\gamma$ is without conjugate points, which is equivalent to  
\[V_{v}\cap d\phi^{-t}(v)V_{\phi^t(v)}=\{0\},\qquad\forall t\in\R\setminus\{0\}.\]
We define the vector subspaces
\begin{align}
\label{e:Green_t}
 G_t:= d\phi^{-t}(\phi^{t}(v))V_{\phi^{t}(v)} \subset Z.
\end{align}
Since $\gamma$ is without conjugate points, for each $t\neq0$ the vector subspace $G_t$ is transverse to the vertical $V_v$. Via~\eqref{e:identification_TSM}, we shall always see $G_t$ as a vector subspace of $\langle v\rangle^\bot\times \langle v\rangle^\bot$, and the transversality with $V_v$ implies that $G_t$ is a graph over the horizontal $\langle v\rangle^\bot\times\{0\}$. More precisely, there exist linear symmetric endomorphisms $A_t:\langle v\rangle^\bot\to \langle v\rangle^\bot$, depending smoothly on $t\in\R\setminus\{0\}$, such that
$G_t\equiv\mathrm{graph}(A_t)$.
The associated quadratic forms $Q_t(w)=g(A_tw,w)$ are monotone increasing in $t\in\R\setminus\{0\}$, and we have $Q_t\leq Q_{-s}$ for all $s,t>0$. Therefore, the limits
\begin{align*}
 A_{\pm}:=\lim_{t\to\pm\infty} A_t,
 \qquad
 G_{\pm}:=\lim_{t\to\pm\infty} G_t,
\end{align*}
exist, and we have 
$G_\pm\equiv\mathrm{graph}(A_\pm)$.
The associated quadratic forms $Q_\pm(w)=g(A_\pm w,w)$ satisfy $Q_+\leq Q_-$.
Pushing forward $G_\pm$ with the linearized geodesic flow $d\phi^t$, we obtain the so-called Green bundles of $\gamma$, which are well defined even if $\gamma$ is a closed geodesic. For our purposes, we will only need the vector spaces $G_\pm$, which we will call Green spaces. We will refer to the linear maps $A_\pm$ as to Green endomorphisms.

Let us now assume that $\gamma$ is closed. We recall that  $\gamma$ is said to be hyperbolic when $\dot\gamma$ is a hyperbolic periodic orbit of the geodesic flow $\phi^t$.
The proof of Proposition~\ref{p:make_hyperbolic} will require the following special case of a theorem due to Eberlein \cite{Eberlein:1973aa} (the general statement actually holds for arbitrary compact invariant subsets of $\phi^t$ without conjugate points). 

\begin{Thm}[Eberlein]
\label{t:Eberlein}
A closed geodesic without conjugate points is hyperbolic if and only if its Green spaces satisfy $G_-\cap G_+=\{0\}$.
\hfill\qed
\end{Thm}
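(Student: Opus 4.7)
The strategy is to apply Eberlein's criterion (Theorem~\ref{t:Eberlein}): it suffices to show that, in the new metric $\tilde g := e^\rho g$, the Green spaces of $\gamma$ satisfy $\tilde G_- \cap \tilde G_+ = \{0\}$. Since $\rho \geq 0$ attains its minimum $0$ along $\gamma$, both $\rho$ and $d\rho$ vanish on $\gamma$, so the conformal-change formula for the Levi-Civita connection collapses to $\tilde\nabla = \nabla$ along $\gamma$; in particular $\gamma$ remains a geodesic of $\tilde g$, with the same minimal period $\tau_\gamma$ and the same speed, and covariant derivatives along $\gamma$ in the two metrics coincide. The conformal-change formula for the Riemann tensor, evaluated along $\gamma$ where $\rho = 0$ and $d\rho = 0$, shows that in a $g$-parallel orthonormal frame of $\langle\dot\gamma\rangle^\perp$ the Jacobi operator changes by $\tilde K(t) = K(t) - H(t)$, where $H(t)$ is the restriction of $\tfrac{1}{2}\mathrm{Hess}(\rho)|_{\gamma(t)}$ to the normal space. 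By hypothesis $H(t)$ is symmetric and positive definite for every $t$. Sturm--Riccati comparison (decreasing the curvature operator cannot create conjugate points) shows that $\gamma$ has no conjugate points in $\tilde g$, so the Green endomorphisms $\tilde A_\pm(t)$ exist, are $\tau_\gamma$-periodic and symmetric, satisfy $\tilde A_\pm' + \tilde A_\pm^2 + K - H = 0$, and obey $\tilde A_- \geq \tilde A_+$.

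Assume, for contradiction, that $(\tilde A_-(0) - \tilde A_+(0))w_0 = 0$ for some $w_0 \neq 0$. Then the Jacobi field $J$ in $\tilde g$ along $\gamma$ with $J(0) = w_0$ and $\nabla_t J(0) = \tilde A_+(0) w_0$ lies in $\tilde G_+ \cap \tilde G_-$ at every time, and by invariance of the Green bundles $\nabla_t J(t) = \tilde A_+(t) J(t) = \tilde A_-(t) J(t)$ for all $t$. The linear ODE $\nabla_t J = \tilde A_+ J$ has $\tau_\gamma$-periodic coefficients; its Poincaré return map restricted to $\tilde G_+ \cap \tilde G_-$ is symplectic with spectrum on the unit circle. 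After ruling out generalised eigenvectors (which would force unbounded polynomial growth of $J$, incompatible with membership in both $\tilde G_\pm$), one obtains uniform bounds $0 < m \leq |J(t)|_g \leq M$ for all $t \in \R$.

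The contradiction comes from the classical Riccati-substitution identity: for any vector field $J$ along $\gamma$ and any symmetric $U(t)$ satisfying $U' + U^2 + \kappa = 0$,
\[
\int_a^b \bigl(\|\nabla_t J\|_g^2 - g(\kappa J, J)\bigr)\, dt = \int_a^b \|\nabla_t J - U J\|_g^2\, dt + g(UJ, J)\Big|_a^b.
\]
Applying this once in $g$ with $(U, \kappa) = (A_+, K)$ and once in $\tilde g$ with $(U, \kappa) = (\tilde A_+, K - H)$, then subtracting and using $\nabla_t J = \tilde A_+ J$ to kill the $\tilde g$-integral, one obtains
\[
\int_a^b g(HJ, J)\, dt + \int_a^b \|(\tilde A_+ - A_+) J\|_g^2\, dt = g\bigl((\tilde A_+ - A_+) J, J\bigr)\Big|_a^b.
\]
As $b - a \to \infty$, the left-hand side grows at least linearly since $g(HJ, J) \geq \lambda_{\min}(H)\, m^2 > 0$ uniformly in $t$, whereas the right-hand side stays bounded by periodicity of $\tilde A_+, A_+$ and the uniform bound on $|J|_g$. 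This contradicts the equality, so $\tilde G_+ \cap \tilde G_- = \{0\}$, and Eberlein's theorem gives the hyperbolicity of $\gamma$ in $\tilde g$.

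The step I expect to be hardest is the uniform lower bound on $|J(t)|$ in the second paragraph, i.e.~the exclusion of non-trivial Jordan blocks of the linearised Poincaré map inside $\tilde G_+ \cap \tilde G_-$. A cleaner alternative is available whenever $d\phi^{\tau_\gamma}|_{\tilde G_+ \cap \tilde G_-}$ has eigenvalue $+1$: one then obtains a genuinely periodic Jacobi field $J$ and runs the identity above on $[0, \tau_\gamma]$, combining it with the standard inequality $I_{[0,\tau_\gamma]}(J, J) \geq 0$ for periodic variations of a closed geodesic without conjugate points. The general case reduces to this either by iterating $\gamma$ sufficiently many times, or directly through the long-interval estimate just described.
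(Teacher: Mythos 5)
Your proposal does not address the statement in question. Theorem~\ref{t:Eberlein} is Eberlein's criterion itself: for a closed geodesic without conjugate points, hyperbolicity of $\dot\gamma$ is \emph{equivalent} to the transversality $G_-\cap G_+=\{0\}$ of its Green spaces. What you have written is instead an argument for Proposition~\ref{p:make_hyperbolic} (hyperbolicity of $\gamma$ for the conformal metric $e^\rho g$), and your very first step is ``apply Eberlein's criterion'' --- i.e.\ you invoke as a black box exactly the theorem you were asked to prove. As a proof of Theorem~\ref{t:Eberlein} this is circular, and nothing in the proposal touches what such a proof would actually require: relating the Green bundles $G_\pm$ to the stable and unstable subspaces of the linearized Poincar\'e map of $\dot\gamma$ (showing $G_\pm$ contain, and under the no-conjugate-point hypothesis coincide with, $E^s$ and $E^u$ when the orbit is hyperbolic, and conversely that transversality of $G_\pm$ forces exponential dichotomy), together with the converse implication. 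Note also that the paper itself does not prove this theorem; it is quoted from Eberlein's work \cite{Eberlein:1973aa} and stated with a \qedsymbol, so there is no internal proof to match --- but a blind proof attempt would still have to supply the dynamical argument above rather than assume it.

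For what it is worth, the content you did write is a recognizable variant of the paper's Appendix argument for Proposition~\ref{p:make_hyperbolic}: the paper compares the Green endomorphisms $A_\pm$ and $\tilde A_\pm$ of $g$ and $\tilde g=e^\rho g$ via the index form, obtaining $g(\tilde A_+w,w)+\epsilon\|w\|_g^2\leq g(\tilde A_-w,w)$ directly, whereas you argue by contradiction with a Jacobi field in $\tilde G_+\cap\tilde G_-$ and a Riccati identity, which needs the extra (and in your own words hardest) step of bounding $\|J(t)\|_g$ from below along the orbit. But this is a different proposition; it does not constitute, and cannot substitute for, a proof of Theorem~\ref{t:Eberlein}.
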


\subsection{The index form}
Let $\gamma:\R\to M$ be a geodesic without conjugate points. We denote by $\XX$ the space of orthogonal vector fields $Y:\R\to TM$ along $\gamma$, where orthogonal means $g(\dot\gamma,Y)\equiv0$. We denote by $\JJ\subset\XX$ the subspace of orthogonal Jacobi fields, which are those $J\in\XX$ that satisfy the Jacobi equation 
\[\nabla_t^2 J+R(J,\dot\gamma)\dot\gamma=0;\] 
equivalently, they are those vector fields $J:\R\to TM$ along $\gamma$ such that 
\[\dot J(t)=d\phi^t(v)w,\]
where $v=\dot\gamma(0)$ and $w\in Z$ (with the notation of the previous subsection).

For each $a<b$, we consider the index form of the geodesic arc $\gamma|_{[a,b]}$, which is the quadratic form
\begin{align*}
 h_{[a,b]}(Y)
 =
 \int_a^b \Big(\|\nabla_t Y\|_{g}^2-g(R(Y,\dot\gamma)\dot\gamma,Y)\Big)\,dt,
 \qquad\forall Y\in\XX.
\end{align*}
We shall need two elementary properties of the index form:
\begin{itemize}
\item[(i)] For each $J\in\JJ$, we have
\begin{align*}
 h_{[a,b]}(J)=g(\nabla_t J|_{t=b},J(b))-g(\nabla_t J|_{t=a},J(a)). 
\end{align*}

\item[(ii)] For each $J\in\JJ$ and $Y\in\XX$ such that $J(a)=Y(a)$ and $J(b)=Y(b)$, we have
\begin{align*}
 h_{[a,b]}(J)\leq h_{[a,b]}(Y). 
\end{align*}
\end{itemize}
For each $\tau\in\R\setminus\{0\}$, we set
\begin{align*}
h_\tau(Y)
:=
\left\{
  \begin{array}{@{}cc}
    h_{[0,\tau]}(Y), & \mbox{if }\tau>0, \vspace{5pt}\\ 
    h_{[\tau,0]}(Y), & \mbox{if }\tau<0. 
  \end{array}
\right. 
\end{align*}
We now employ Eberlein's theorem, together with the index form, to prove the perturbation result stated at the beginning of the section.

\begin{proof}[Proof of Proposition~\ref{p:make_hyperbolic}]
We set $\tilde g:=e^\rho g$. The Riemannian metrics $g$ and $\tilde g$ define associated Levi-Civita connections $\nabla$ and $\tilde\nabla$ and Riemann tensors $R$ and $\tilde R$. Along $\gamma$, since $\rho$ and $d\rho$ vanish identically, we have $\nabla=\tilde\nabla$ and
\begin{equation}
\label{e:R_vs_tildeR}
\begin{split}
 g(\tilde R(w,\dot\gamma(t))\dot\gamma(t),w)-g(R(w,\dot\gamma(t))\dot\gamma(t),w)
=
-\tfrac12 d^2\rho(\gamma(t))[w,w],\\
\forall w\in\langle\dot\gamma(t)\rangle^\bot.
\end{split}
\end{equation}
By our assumptions on $\rho$, there exists a constant $\delta>0$ such that 
\begin{equation}
\label{e:rho_convex}
\tfrac12 d^2\rho(\gamma(t))[w,w]\geq \delta\,\|w\|_g^2,
\qquad\forall w\in\langle\dot\gamma(t)\rangle^\bot.
\end{equation}
We set $v:=\dot\gamma(0)$, and first consider the Riemannian objects associated with $g$. For each $w\in\langle v\rangle^\bot$ and $\tau\neq0$, we denote by $J_{\tau,w}$ the Jacobi field along $\gamma$ such that $J_{\tau,w}(0)=w$ and $J_{\tau,w}(\tau)=0$. Notice that $\nabla_t J_{\tau,w}|_{t=0}=A_{\tau}w$, where $A_\tau$ is the symmetric endomorphism of $\langle v\rangle^\bot$ converging to the Green endomorphisms $A_\pm$ as $\tau\to\pm\infty$. By property (i) above, the index forms $h_\tau$ of $\gamma$ with respect to $g$ satisfy
\begin{align*}
 h_\tau(J_{\tau,w}) 
 = 
 \left\{
   \begin{array}{@{}ll}
    - g(A_\tau w,w), & \mbox{if }\tau>0, \vspace{5pt} \\ 
    g(A_\tau w,w), & \mbox{if }\tau<0. 
  \end{array}
 \right.
\end{align*}
We denote with a tilde the analogous Riemannian objects with respect to $\tilde g$, which satisfy analogous properties. By \eqref{e:R_vs_tildeR}, \eqref{e:rho_convex}, and property (ii) of the index form, for each $\tau\geq1$ we have
\begin{equation}
\label{e:Green_comparison}
\begin{split}
 g(A_\tau w,w)
&=
- h_\tau(J_{\tau,w})
\geq
- h_\tau(\tilde J_{\tau,w})\\
&=
- \tilde h_\tau(\tilde J_{\tau,w}) 
+ 
\int_0^\tau \tfrac12 d^2\rho(\gamma)[\tilde J_{\tau,w},\tilde J_{\tau,w}]\,dt
\\
&\geq
- \tilde h_\tau(\tilde J_{\tau,w}) + \delta\int_0^1 \|\tilde J_{\tau,w}\|_g^2\,dt\\
& =
g(\tilde A_\tau w,w) + \delta\int_0^1 \|\tilde J_{\tau,w}\|_g^2\,dt.
\end{split}
\end{equation}
As $\tau\to\infty$, we have $\tilde J_{\tau,w}\to\tilde J_w$, where $\tilde J_w$ is the Jacobi field such that $\tilde J_w(0)=w$ and $\nabla_t \tilde J_w|_{t=0}=\tilde A_+w$. We set
\begin{align*}
 \epsilon:=\delta \min_{w} \|w\|_g^{-2}\int_0^1 \|\tilde J_{w}\|_g^2\,dt >0,
\end{align*}
where the minimum ranges over all $w\in\langle v\rangle^\bot\setminus\{0\}$.
By taking the limit for $\tau\to\infty$ in~\eqref{e:Green_comparison}, we infer
\begin{align}
\label{e:comparison_A_1}
  g(A_+ w,w)
  \geq
  g(\tilde A_+ w,w)+ \epsilon\|w\|_g^2.
\end{align}
Analogously, for each $\tau<0$, we have
\begin{equation*}
 g(A_{\tau} w,w)
=
 h_\tau(J_{\tau,w})
\leq
 h_\tau(\tilde J_{\tau,w})
\leq
\tilde h_\tau(\tilde J_{\tau,w})
=
g(\tilde A_{\tau} w,w),
\end{equation*}
and by taking the limit for $\tau\to-\infty$ we infer
\begin{align}\label{e:comparison_A_2}
 g(A_{-} w,w)\leq g(\tilde A_{-} w,w).
\end{align}
The inequalities~\eqref{e:comparison_A_1} and~\eqref{e:comparison_A_2}, together with  $g(A_+ w,w)\leq g(A_- w,w)$ mentioned in the previous subsection, imply
\begin{align*}
g(\tilde A_+ w,w)+\epsilon\|w\|_g^2
\leq
g(\tilde A_- w,w),\qquad\forall w\in\langle v\rangle^\bot.
\end{align*}
Therefore the Green spaces $\tilde G_+\equiv\mathrm{graph}(\tilde A_+)$ and $\tilde G_-\equiv\mathrm{graph}(\tilde A_-)$ have trivial intersection, and Theorem~\ref{t:Eberlein} implies that $\gamma$ is a hyperbolic closed geodesic for~$\tilde g$.
\end{proof}

\bibliographystyle{amsalpha}
\bibliography{biblio}

\vspace{20pt}

\end{document}